\newtheorem{theorem}{Theorem}[section]
\newtheorem{definition}[theorem]{Definition}
\newtheorem{lemma}[theorem]{Lemma}
\newcommand{\ds}{\displaystyle}
\begin{document}

%

\title{Positive solutions for m-point p-Laplacian fractional boundary value problem involving Riemann Liouville fractional integral boundary conditions on the half line}

\author[affil1]{Dondu Oz}
\ead{dondu.oz@ege.edu.tr}
\author[affil2]{Ilkay Yaslan Karaca}
\ead{ilkay.karaca@ege.edu.tr}
\address[affil1]{Department of Mathematics, Ege University, 35100, Bornova, Izmir, Turkey}
\address[affil2]{Department of Mathematics, Ege University, 35100, Bornova, Izmir, Turkey}
\newcommand{\AuthorNames}{D. Oz, I.Y. Karaca}

\newcommand{\FilMSC}{Primary 34A08; Secondary 34B15, 26A33, 34B18, 47H10}
\newcommand{\FilKeywords}{Fractional calculus, Boundary value problem, P-Laplacian operator, Fixed point theorem, Positive solutions, Half line, Green function.}
\newcommand{\FilCommunicated}{Communicated by Maria Alessandra Ragusa}
\newcommand{\FilSupport}{The first author was granted a fellowship by the Scientific and Technological Research Council of Turkey
	(TUBITAK-2211-A).}

\begin{abstract}
This paper investigates the existence of positive solutions for m-point p-Laplacian fractional boundary value problem involving Riemann Liouville fractional integral boundary conditions on the half line via the Leray-Schauder Nonlinear Alternative theorem and the use and some properties of the Green function. As an application, an example is presented to demonstrate our main result.
\end{abstract}

\maketitle

\makeatletter
\renewcommand\@makefnmark%
{\mbox{\textsuperscript{\normalfont\@thefnmark)}}}
\makeatother

\section{Introduction}
Fractional differential equations arise in many engineering and scientific disciplines as the mathematical models of systems and processes in the fields of physics, mechanics, biology, chemistry, polymer rhcology, aero dynamics, capacitor theory, control theory, electrical circuits and other fields. There has been a noticable development in the study of fractional differential equations in recent years, see the monographs of Miller et al. \cite{book.5} and Agarwal et al. \cite{book.6} (see also \cite{article.7, article.15, article.16, article.17}).\\
\indent It should be noted that most of the papers on fractional calculus are devoted to the solvability of fractional differential equations on finite interval. Very recently, there are some papers concerning the fractional differential equations on infinite intervals, for example references \cite{article.1,article.5,article.9,article.13,article.18,article.19,article.22,article.23}.\\
\indent Recently, fractional differential equations with p-Laplacian operator have gained its importance and popularity due to its distinguished applications in numerous several fields of science and engineering, such as viscoelasticity mechanics, electrochemistry, fluid mechanics, non-Newtonian mechanics, combustion theory and material science. There have appeared some results for the existence of solutions or positive solutions of boundary value problems for fractional differential equations with p-Laplacian operator; see \cite{article.2, article.3, article.4, article.6,article.8,article.9,article.11,article.12,article.14,article.20,article.21,article.24,article.25,article.26,article.27} and the references therein. There is not work on positive solutions for m-point p-Laplacian fractional boundary value problem on the half line except that in \cite{article.19}.\\
\indent Liang et al.\cite{article.19} investigated the following m-point fractional boundary value problem with p-Laplacian on an infinite interval:
\begin{eqnarray*}
	\begin{cases}
		D_{0^+}^\gamma\left( \phi_{p}\left( D_{0^+}^\alpha u(t)\right) \right)  + a(t)f(t,u(t)) =0,\ 0<t< +\infty, \\
		u(0)=u'(0)=0, \ \ D_{0^+}^{\alpha-1} u(+\infty) = \displaystyle\sum_{i=1}^{m-2} \beta_i u(\xi_i),\ \ D_{0^+}^{\alpha} u(t)|_{t=0} =0,
	\end{cases}
\end{eqnarray*}
where $0<\gamma\leq1, \ 2<\alpha\leq3,\ D_{0^+}^\alpha$ is the standard Riemann-Liouville fractional derivative. $\phi_{p}(s)=|s|^{p-2}s,\ p>1,\ (\phi_{p})^{-1}=\phi_{q},\ \dfrac{1}{p}+\dfrac{1}{q}=1.\ 0<\xi_1<\xi_2<...<\xi_{m-2}<+\infty,\ \beta_i\geq0,\ i=1,2,...,m-2$ satisfies $0<\ds\sum_{i=1}^{m-2}\beta_i\xi_{i}^{\alpha-1}<\Gamma(\alpha),$ where $\Gamma(\alpha)$ is the Euler gamma function defined by $ \Gamma(\alpha)= \ds\int_{0}^{+\infty}t^{\alpha-1}e^{-t}dt, \ \ \alpha>0.$ They established solvability of the above fractional boundary value problems by means of the properties of the Green function and some fixed point theorems.\\
\indent Motivated the above paper, in this paper, we consider the following m-point p-Laplacian fractional boundary value problem (BVP) involving Riemann Liouville fractional integral boundary conditions on the half line: 
\begin{eqnarray}\label{eq1.1}
\begin{cases}
D_{0^+}^\gamma\left( \phi_{p}\left( D_{0^+}^\alpha u(t)\right) \right)  + a(t)f(t,u(t),u'(t)) =0,\ t\in[0, +\infty), \\
u(0)=u'(0)=0, \\ 
\displaystyle\lim_{t\rightarrow +\infty}D_{0^+}^{\alpha-1} u(t) = \displaystyle\sum_{i=1}^{m-2} \eta_i I_{0^+}^\beta u'(\xi_i),\ \ D_{0^+}^{\alpha} u(t)|_{t=0} =0,
\end{cases}
\end{eqnarray}\\
where $D_{0^+}^\gamma$ and $D_{0^+}^\alpha$ are the standard Riemann-Liouville fractional derivatives and $I_{0^+}^\beta$ is the standard Riemann-Liouville fractional integral with $0<\gamma\leq1, \ \ 2<\alpha\leq3$,\ \ $\beta>0,$\ $0<\xi_1<\xi_2<...<\xi_{m-2}< +\infty,$\ \ $i=1,...,m-2,$\ \ $\eta_i>0.$ The p-Laplacian operator is defined as $\phi_{p}(s)=|s|^{p-2}s$, $p>1$, $(\phi_{p})^{-1}=\phi_{q},$ $\dfrac{1}{p}+\dfrac{1}{q}=1.$ Throughout this paper we assume that following conditions hold:
\begin{itemize}
	\item[$(H1)$] $\eta_i>0$,\ \ $0< \displaystyle\sum_{i=1}^{m-2} \eta_i \xi_{i}^{\alpha+\beta-2}< \Gamma(\alpha+\beta-1);$
	\item[$(H2)$] $f\in \mathscr{C}([0,+\infty)\times[0,+\infty)\times[0,+\infty), [0,+\infty)),$ $f(t,0,0)\not\equiv 0$ on any subinterval of $(0,+\infty)$ and $f(t,(1+t^{\alpha-1})u,(1+t^{\alpha-1})v)$ is bounded when $u, \ v$ are bounded.
	\item[$(H3)$] $ a:[0, +\infty) \rightarrow [0, +\infty) $ is not identical zero on any closed subinterval of  $[0, +\infty) $  and $$\displaystyle\int_{0}^{+\infty}\phi_{q}\left( \dfrac{1}{\Gamma(\gamma)}\int_{0}^{s} (s-\tau)^{\gamma-1}a(\tau)d\tau\right)ds< +\infty.$$
\end{itemize}
\hspace{.5cm} By using Leray-Schauder Nonlinear Alternative theorem in \cite{book.6}, we get the existence of positive solutions for the BVP \eqref{eq1.1}. To the author's knowledge, the existence of positive solutions for m-point p-Laplacian fractional boundary value problems involving Riemann Liouville fractional integral boundary conditions on the half line is not investigated till now. Thus, this results can be considered as a contribution to this field. The organization of this paper is as follows. In section 2, we provide some definitions and preliminary lemmas which are key tools for our main result. In section 3, we give and prove our main result. Finally, in section 4, we give an example to illustrate how the main result can be used in practice.
\section{Preliminaries}
\hspace{.5cm} In this section, we introduce some preliminary facts which are used throughout this article. Now we recall the following definitions, which can be found in \cite{book.1,book.3,book.4,book.5}.
\begin{definition}\cite{book.1}\label{def2.1}
	The integral 
	\begin{equation*}
	I_{0^+}^{\alpha}f(t)=\ds\frac{1}{\Gamma(\alpha)}\ds\int_{0}^{t}(t-s)^{\alpha-1}f(s)ds,\ \ t>0, \ \ \alpha>0,
	\end{equation*}	
	is called {\it Riemann-Liouville} fractional integral of order $\alpha.$
\end{definition}
\begin{definition}\cite{book.1}\label{def2.2}
	For a function $f(t)$ given in the interval $[0, +\infty)$, the expression
	\begin{equation*}
	D_{0^+}^{\alpha}f(t)=\ds\frac{1}{\Gamma(n-\alpha)}\left( \ds\frac{d}{dt}\right) \ds\int_{0}^{t}\ds\frac{f(s)}{(t-s)^{\alpha-n+1}}ds,
	\end{equation*}	
	where $n=[\alpha]+1,$\ \ $[\alpha]$ denotes the integer part of number $\alpha$, is called the {\it Riemann-Liouville} fractional derivative of order $\alpha >0.$ 
\end{definition}
\begin{lemma}\cite{book.1}\label{lem2.3}
	Let $\alpha > 0.$ Assume that $u \in C(0,1)\cap L(0,1)$ with a fracdtional derivative of order $\alpha$ that belongs to $C(0,1)\cap L(0,1)$. Then 
	$$ I_{0^+}^{\alpha}D_{0^+}^{\alpha}u(t) = u(t)+c_{1}t^{\alpha - 1}+c_{2}t^{\alpha - 2}+c_{3}t^{\alpha - 3}+. . . +c_{n}t^{\alpha - n},$$ for some $c_{i} \in \mathbb{R},$\ \ $ i=1,2,...,n \ \ (n = [\alpha]+1)$.
\end{lemma}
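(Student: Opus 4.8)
Because this is a classical composition identity for Riemann--Liouville operators, the plan is to reduce everything to two elementary building blocks: the factorization $D_{0^+}^{\alpha}=D^{n}\circ I_{0^+}^{\,n-\alpha}$ (with $D=d/dt$ and $n=[\alpha]+1$) and the semigroup law $I_{0^+}^{\mu}I_{0^+}^{\nu}=I_{0^+}^{\mu+\nu}$ for $\mu,\nu\ge 0$. From these two facts the left-inverse property $D_{0^+}^{\alpha}I_{0^+}^{\alpha}f=f$ follows at once, since
\begin{equation*}
D_{0^+}^{\alpha}I_{0^+}^{\alpha}f=D^{n}I_{0^+}^{\,n-\alpha}I_{0^+}^{\alpha}f=D^{n}I_{0^+}^{n}f=f,
\end{equation*}
the last equality being the ordinary fundamental theorem of calculus applied $n$ times. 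Since the statement asks only for the \emph{existence} of constants $c_1,\dots,c_n$, it suffices to identify the kernel of $D_{0^+}^{\alpha}$ and to show that $I_{0^+}^{\alpha}D_{0^+}^{\alpha}u-u$ lies in it.

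The key step I would take is to set $w:=I_{0^+}^{\alpha}D_{0^+}^{\alpha}u-u$ and apply $D_{0^+}^{\alpha}$, obtaining $D_{0^+}^{\alpha}w=D_{0^+}^{\alpha}I_{0^+}^{\alpha}\big(D_{0^+}^{\alpha}u\big)-D_{0^+}^{\alpha}u=0$ by the left-inverse property just recorded. Thus $w$ solves $D_{0^+}^{\alpha}w=0$, i.e.\ $D^{n}I_{0^+}^{\,n-\alpha}w=0$, which forces $I_{0^+}^{\,n-\alpha}w$ to be an ordinary polynomial $\sum_{j=0}^{n-1}a_{j}t^{j}$ of degree at most $n-1$. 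Applying $D_{0^+}^{\,n-\alpha}$ to this equality and using the left inverse once more, together with the elementary evaluation $D_{0^+}^{\,n-\alpha}t^{j}=\frac{\Gamma(j+1)}{\Gamma(j+1-n+\alpha)}\,t^{\,\alpha-n+j}$, yields
\begin{equation*}
w(t)=\sum_{j=0}^{n-1}a_{j}\,\frac{\Gamma(j+1)}{\Gamma(j+1-n+\alpha)}\,t^{\,\alpha-n+j}=\sum_{k=1}^{n}c_{k}\,t^{\,\alpha-k},
\end{equation*}
after reindexing by $k=n-j$. This is exactly the asserted identity.

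The main obstacle, and the point requiring genuine care rather than formal manipulation, is the justification of these operator identities under the weak regularity hypothesis $u\in C(0,1)\cap L(0,1)$ with $D_{0^+}^{\alpha}u\in C(0,1)\cap L(0,1)$: one must check that $I_{0^+}^{\,n-\alpha}u$ is $n$ times differentiable so that $D_{0^+}^{\alpha}u$ is well defined, that the semigroup law and the vanishing of $D^{n}I_{0^+}^{\,n-\alpha}w$ are legitimate in this function class, and that the implication ``$D^{n}(\,\cdot\,)=0$ gives a polynomial of degree $\le n-1$'' may be applied to $I_{0^+}^{\,n-\alpha}w$. All of these are standard in the Riemann--Liouville theory (see \cite{book.1}), and the existence-only form of the conclusion means the coefficients $a_j$, hence $c_k$, need never be computed explicitly from $u$.
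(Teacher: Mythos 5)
Your argument is correct: the left-inverse identity $D_{0^+}^{\alpha}I_{0^+}^{\alpha}f=f$, the reduction to the kernel equation $D_{0^+}^{\alpha}w=0$ for $w=I_{0^+}^{\alpha}D_{0^+}^{\alpha}u-u$, and the recovery of $w$ as a combination of $t^{\alpha-1},\dots,t^{\alpha-n}$ by applying $D_{0^+}^{\,n-\alpha}$ to the polynomial $I_{0^+}^{\,n-\alpha}w$ is exactly the standard proof of this composition formula. The paper itself gives no proof of Lemma 2.3 --- it is quoted verbatim from the reference \cite{book.1} --- so there is nothing to compare against; your sketch matches the textbook argument, and the only point worth flagging (which you already acknowledge) is the routine verification that the operator identities are licit under the stated regularity, plus the harmless degenerate case where $\alpha$ is an integer and the coefficient $\Gamma(j+1)/\Gamma(j+1-n+\alpha)$ vanishes.
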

\begin{lemma}\label{lem2.4}
	Let $\alpha, \beta >0.$ $f \in L^{1}[a,b].$ Then $I_{0^+}^{\alpha} I_{0^+}^{\beta} f(t)=I_{0^+}^{\alpha+\beta}f(t)=I_{0^+}^\beta I_{0^+}^\alpha f(t)$ and $D_{0^+}^\alpha I_{0^+}^\alpha f(t)=f(t)$,\ \ for all $t\in [a,b].$
\end{lemma}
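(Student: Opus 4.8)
The plan is to establish the semigroup (index) law first and then deduce the inversion formula from it. For the first identity, I would write out both fractional integrals using Definition~\ref{def2.1} and substitute one into the other, obtaining a double integral over the triangle $0\le\tau\le s\le t$. Since $f\in L^1[a,b]$ and the kernels $(t-s)^{\alpha-1}$, $(s-\tau)^{\beta-1}$ are integrable (because $\alpha,\beta>0$), Tonelli's theorem justifies interchanging the order of integration, which gives
\begin{equation*}
I_{0^+}^{\alpha} I_{0^+}^{\beta} f(t) = \frac{1}{\Gamma(\alpha)\Gamma(\beta)} \int_0^t f(\tau)\left(\int_\tau^t (t-s)^{\alpha-1}(s-\tau)^{\beta-1}\,ds\right)d\tau.
\end{equation*}

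The key computation is the inner integral. With the substitution $s=\tau+(t-\tau)w$ it becomes $(t-\tau)^{\alpha+\beta-1}\int_0^1 w^{\beta-1}(1-w)^{\alpha-1}\,dw = (t-\tau)^{\alpha+\beta-1}B(\beta,\alpha)$, and using $B(\beta,\alpha)=\Gamma(\alpha)\Gamma(\beta)/\Gamma(\alpha+\beta)$ collapses the whole expression to $\frac{1}{\Gamma(\alpha+\beta)}\int_0^t (t-\tau)^{\alpha+\beta-1}f(\tau)\,d\tau = I_{0^+}^{\alpha+\beta}f(t)$. Since $\alpha$ and $\beta$ enter symmetrically, the same argument with the indices swapped yields $I_{0^+}^{\beta}I_{0^+}^{\alpha}f(t)=I_{0^+}^{\alpha+\beta}f(t)$, completing the chain of equalities.

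For the inversion identity, I would first note that, with $n=[\alpha]+1$, Definition~\ref{def2.2} can be rewritten as $D_{0^+}^{\alpha}g=\frac{d^n}{dt^n}I_{0^+}^{n-\alpha}g$, since the inner integral there is exactly $I_{0^+}^{n-\alpha}g$. Applying this to $g=I_{0^+}^{\alpha}f$ and invoking the semigroup law just established (with exponents $n-\alpha>0$ and $\alpha>0$) gives $D_{0^+}^{\alpha}I_{0^+}^{\alpha}f=\frac{d^n}{dt^n}I_{0^+}^{n}f$. Finally, $I_{0^+}^{n}$ is the ordinary $n$-fold integration operator (Cauchy's repeated-integration formula), so differentiating $n$ times recovers $f$ by the fundamental theorem of calculus, whence $D_{0^+}^{\alpha}I_{0^+}^{\alpha}f(t)=f(t)$.

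I expect the only delicate step to be the justification of the Fubini/Tonelli interchange together with the careful bookkeeping of the Beta integral; once the index law is in hand, the derivative identity reduces to the integer-order case, where it is a routine consequence of the classical fundamental theorem of calculus.
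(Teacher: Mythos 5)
The paper offers no proof of Lemma~\ref{lem2.4}: it is recalled as a known fact from the cited monographs on fractional calculus, so there is no argument of the authors' to compare yours against. Your proposal is the standard textbook proof and it is sound: the Dirichlet/Tonelli interchange over the triangle $\{0\le\tau\le s\le t\}$ is legitimate because the integrand is nonnegative, the substitution $s=\tau+(t-\tau)w$ correctly produces $(t-\tau)^{\alpha+\beta-1}B(\beta,\alpha)$, and the identity $B(\beta,\alpha)=\Gamma(\alpha)\Gamma(\beta)/\Gamma(\alpha+\beta)$ collapses the double integral to $I_{0^+}^{\alpha+\beta}f(t)$; the reduction of $D_{0^+}^{\alpha}I_{0^+}^{\alpha}$ to $\frac{d^n}{dt^n}I_{0^+}^{n}$ via the semigroup law (with $n-\alpha>0$ since $n=[\alpha]+1$) is also the standard route, and you correctly read the missing exponent $n$ on the derivative in Definition~\ref{def2.2}. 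The only caveat worth recording is that for $f\in L^{1}[a,b]$ these identities hold for almost every $t$ (the final step $\frac{d^n}{dt^n}I_{0^+}^{n}f=f$ uses Lebesgue differentiation when $f$ is merely integrable), so the lemma's phrase ``for all $t\in[a,b]$'' is slightly too strong as stated; this is an imprecision of the statement itself rather than a gap in your argument.
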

\begin{lemma}\label{lem2.5}
	Let $\alpha, \beta >0$ and $n=[\alpha]+1,$ then the following relations hold:
	$$ D_{0^+}^\alpha t^{\beta-1}=\dfrac{\Gamma(\beta)}{\Gamma(\beta-\alpha)}t^{\beta-\alpha-1},\ \ \beta>n,$$
	$$ D_{0^+}^\alpha t^{k}=0, \ \ k=0,1,2,...,n-1.$$
\end{lemma}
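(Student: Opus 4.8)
The plan is to reduce both identities to a single computation --- the Riemann--Liouville derivative of a pure power of $t$ --- and to treat the second as a degenerate case of the first. Writing $n=[\alpha]+1$ as in Definition~\ref{def2.2}, I would first record the operator factorization $D_{0^+}^{\alpha}=\frac{d^{n}}{dt^{n}}\,I_{0^+}^{n-\alpha}$, which is exactly the content of that definition once the inner integral is recognized, via Definition~\ref{def2.1}, as $\Gamma(n-\alpha)\,I_{0^+}^{n-\alpha}$. Thus it suffices to evaluate $I_{0^+}^{n-\alpha}t^{\beta-1}$ and then differentiate $n$ times classically. The hypothesis $\beta>n$ is what guarantees that the resulting power is genuinely $n$-times differentiable and that every Gamma argument appearing below stays positive, so no convergence or pole issue arises in the first identity.

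For the core computation I would evaluate
\[
I_{0^+}^{n-\alpha}t^{\beta-1}=\frac{1}{\Gamma(n-\alpha)}\int_{0}^{t}(t-s)^{n-\alpha-1}s^{\beta-1}\,ds
\]
by the substitution $s=t\tau$, which factors out the homogeneity as $t^{\beta+n-\alpha-1}$ times the Beta integral $\int_{0}^{1}(1-\tau)^{n-\alpha-1}\tau^{\beta-1}\,d\tau=B(\beta,n-\alpha)$. Using $B(\beta,n-\alpha)=\Gamma(\beta)\Gamma(n-\alpha)/\Gamma(\beta+n-\alpha)$, the prefactor $\Gamma(n-\alpha)$ cancels and I obtain $I_{0^+}^{n-\alpha}t^{\beta-1}=\frac{\Gamma(\beta)}{\Gamma(\beta+n-\alpha)}\,t^{\beta+n-\alpha-1}$.

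Applying $\frac{d^{n}}{dt^{n}}$ with the ordinary power rule $\frac{d^{n}}{dt^{n}}t^{\mu}=\frac{\Gamma(\mu+1)}{\Gamma(\mu+1-n)}\,t^{\mu-n}$ at $\mu=\beta+n-\alpha-1$ makes $\Gamma(\beta+n-\alpha)$ cancel against the new numerator, leaving
\[
D_{0^+}^{\alpha}t^{\beta-1}=\frac{\Gamma(\beta)}{\Gamma(\beta-\alpha)}\,t^{\beta-\alpha-1},
\]
which is the first identity.

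The second identity is then read off as a degenerate case: substituting $\beta=k+1$ with $0\le k\le n-1$ produces a coefficient of the form $\Gamma(k+1)/\Gamma(k+1-\alpha)$ multiplying $t^{k-\alpha}$, and the vanishing comes from the denominator, since $1/\Gamma(\cdot)$ equals zero at every non-positive integer. This is the very same pole mechanism that makes $t^{\alpha-1},\dots,t^{\alpha-n}$ the kernel of $D_{0^+}^{\alpha}$ recorded in Lemma~\ref{lem2.3}, so I would invoke the general formula together with this observation rather than rerun the integral. The only genuine obstacle in the whole argument is the opening step --- justifying the factorization $D_{0^+}^{\alpha}=\frac{d^{n}}{dt^{n}}I_{0^+}^{n-\alpha}$ and the differentiation under the integral sign; once that is granted, everything collapses to the Beta-function identity and routine Gamma-function bookkeeping.
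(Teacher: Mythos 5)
The paper records this lemma as a standard fact and gives no proof of its own, so your argument has to be judged on its own terms. Your treatment of the first identity is correct: the factorization $D_{0^+}^{\alpha}=\frac{d^{n}}{dt^{n}}I_{0^+}^{n-\alpha}$, the Beta-integral evaluation $I_{0^+}^{n-\alpha}t^{\beta-1}=\frac{\Gamma(\beta)}{\Gamma(\beta+n-\alpha)}t^{\beta+n-\alpha-1}$, and the $n$-fold classical differentiation all go through under the hypothesis $\beta>n$, which keeps every Gamma argument positive.

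The second identity is where your argument genuinely fails. Substituting $\beta=k+1$ into the general formula gives $D_{0^+}^{\alpha}t^{k}=\frac{\Gamma(k+1)}{\Gamma(k+1-\alpha)}\,t^{k-\alpha}$, and you claim this vanishes because $1/\Gamma$ is zero at the non-positive integers. But $k+1-\alpha$ is a non-positive integer only when $\alpha$ is itself an integer with $\alpha\geq k+1$; for non-integer $\alpha$ (the relevant case throughout the paper, e.g.\ $\alpha=\frac{5}{2}$ in the example) the argument $k+1-\alpha$ is not an integer at all, so $1/\Gamma(k+1-\alpha)\neq 0$ and the derivative does not vanish. Concretely, $D_{0^+}^{1/2}1=t^{-1/2}/\Gamma(1/2)\neq 0$. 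The pole mechanism you invoke is real, but it attaches to the powers $t^{\alpha-j}$, $j=1,\dots,n$ (take $\beta=\alpha-j+1$, so the denominator is $\Gamma(1-j)$ with $j$ a positive integer) --- precisely the kernel functions appearing in Lemma \ref{lem2.3} --- and this family coincides with $t^{k}$, $k=0,\dots,n-1$, only when $\alpha$ is an integer. The vanishing of the $\alpha$-derivative on $1,t,\dots,t^{n-1}$ is the defining property of the Caputo derivative, not the Riemann--Liouville one, so the second relation as stated cannot be rescued by your route; you would need either to restrict to integer $\alpha$, to replace $t^{k}$ by $t^{\alpha-j}$, or to reinterpret $D_{0^+}^{\alpha}$ in the Caputo sense.
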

To prove the main result of this paper we need the following lemma.
\begin{lemma}\label{lem2.6}
	Let $h \in C[0, +\infty)$ with $\ds\int_{0}^{+\infty}h(s)ds<\infty,$ then fractional BVP
	\begin{eqnarray}\label{eq2.1}
	\begin{cases}
	D_{0^+}^\alpha u(t)+h(t)=0, \ \ t\in \left[ 0, +\infty\right),\ \ 2<\alpha\leq3,\\
	u(0)=u'(0)=0,\\
	D_{0^+}^{\alpha-1} u(+\infty) = \displaystyle\sum_{i=1}^{m-2} \eta_i I_{0^+}^\beta u'(\xi_i)
	\end{cases} 
	\end{eqnarray}
	has a unique solution
	\begin{eqnarray}\label{eq2.2}
	u(t)=\displaystyle\int_{0}^{+\infty}G(t,s)h(s)ds,
	\end{eqnarray}
	where
	{\small\begin{eqnarray}\label{eq2.3}
		G(t,s)=
		\begin{cases}
		\ds\frac{[\Gamma(\alpha+\beta-1)-\ds\sum_{i=1}^{m-2}\eta_{i}(\xi_{i}-s)^{\alpha+\beta-2} ]t^{\alpha-1}-[\Gamma(\alpha+\beta-1)-\ds\sum_{i=1}^{m-2}\eta_{i}{\xi_{i}}^{\alpha+\beta-2} ](t-s)^{\alpha-1}}{\Gamma(\alpha)[\Gamma(\alpha+\beta-1)-\ds\sum_{i=1}^{m-2}\eta_{i}{\xi_{i}}^{\alpha+\beta-2}]},
		& s\leq \min\{t, \xi_{i}\},\\
		\ds\frac{[\Gamma(\alpha+\beta-1)-\ds\sum_{i=1}^{m-2}\eta_{i}(\xi_{i}-s)^{\alpha+\beta-2} ]t^{\alpha-1}}{\Gamma(\alpha)[\Gamma(\alpha+\beta-1)-\ds\sum_{i=1}^{m-2}\eta_{i}{\xi_{i}}^{\alpha+\beta-2}]},
		& 0\leq t\leq s\leq \xi_{i},\\
		\ds\frac{\Gamma(\alpha+\beta-1)t^{\alpha-1}-[\Gamma(\alpha+\beta-1)-\ds\sum_{i=1}^{m-2}\eta_{i}{\xi_{i}}^{\alpha+\beta-2} ](t-s)^{\alpha-1}}{\Gamma(\alpha)[\Gamma(\alpha+\beta-1)-\ds\sum_{i=1}^{m-2}\eta_{i}{\xi_{i}}^{\alpha+\beta-2}]},
		& 0\leq \xi_{i}\leq s\leq t,\\
		\ds\frac{\Gamma(\alpha+\beta-1)t^{\alpha-1}}{\Gamma(\alpha)[\Gamma(\alpha+\beta-1)-\ds\sum_{i=1}^{m-2}\eta_{i}{\xi_{i}}^{\alpha+\beta-2}]},
		& s\geq \max\{t, \xi_{i}\}.
		\end{cases}
		\end{eqnarray}}
	
\end{lemma}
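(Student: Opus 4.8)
The plan is to convert the fractional differential equation into an integral equation via Lemma~\ref{lem2.3}, pin down the three constants of integration using the three boundary conditions, and then reorganize the resulting expression into the single-kernel form \eqref{eq2.2}, from which the piecewise Green function \eqref{eq2.3} can be read off.

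First I would apply the operator $I_{0^+}^\alpha$ to $D_{0^+}^\alpha u(t) = -h(t)$. Since $2 < \alpha \leq 3$ gives $n = [\alpha]+1 = 3$, Lemma~\ref{lem2.3} yields $u(t) = -\frac{1}{\Gamma(\alpha)}\int_0^t (t-s)^{\alpha-1} h(s)\,ds + c_1 t^{\alpha-1} + c_2 t^{\alpha-2} + c_3 t^{\alpha-3}$ for constants $c_1,c_2,c_3 \in \mathbb{R}$. Because $\alpha - 3 \in (-1,0]$ and $\alpha - 2 \in (0,1]$, the terms $t^{\alpha-3}$ and $t^{\alpha-2}$ are the only ones that need not vanish as $t \to 0^+$; imposing $u(0)=0$ forces $c_3 = 0$, and then differentiating and imposing $u'(0)=0$ forces $c_2 = 0$. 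This leaves $u(t) = -\frac{1}{\Gamma(\alpha)}\int_0^t (t-s)^{\alpha-1} h(s)\,ds + c_1 t^{\alpha-1}$, so only $c_1$ remains to be found from the nonlocal condition.

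Next I would exploit the third boundary condition. Writing the Duhamel term as $-I_{0^+}^\alpha h$ and using Lemma~\ref{lem2.4} together with $I_{0^+}^\alpha = I_{0^+}^{\alpha-1}I_{0^+}^1$, I get $D_{0^+}^{\alpha-1}\big(I_{0^+}^\alpha h\big)(t) = \int_0^t h(s)\,ds$, while Lemma~\ref{lem2.5} gives $D_{0^+}^{\alpha-1} t^{\alpha-1} = \Gamma(\alpha)$; hence $D_{0^+}^{\alpha-1}u(t) = -\int_0^t h(s)\,ds + c_1 \Gamma(\alpha)$, and the hypothesis $\int_0^{+\infty} h(s)\,ds < \infty$ lets me pass to the limit to obtain $D_{0^+}^{\alpha-1}u(+\infty) = -\int_0^{+\infty} h(s)\,ds + c_1\Gamma(\alpha)$. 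On the right-hand side, differentiating once gives $u'(t) = -I_{0^+}^{\alpha-1}h(t) + c_1(\alpha-1)t^{\alpha-2}$; applying $I_{0^+}^\beta$, using the semigroup law $I_{0^+}^\beta I_{0^+}^{\alpha-1} = I_{0^+}^{\alpha+\beta-1}$ and the power formula $I_{0^+}^\beta \tau^{\alpha-2} = \frac{\Gamma(\alpha-1)}{\Gamma(\alpha+\beta-1)}t^{\alpha+\beta-2}$, and evaluating at $\xi_i$, I obtain a closed expression for $\sum_{i} \eta_i I_{0^+}^\beta u'(\xi_i)$ that is linear in $c_1$. Equating the two sides and solving determines $c_1$; here hypothesis $(H1)$ is exactly what guarantees the coefficient $\Gamma(\alpha+\beta-1) - \sum_i \eta_i \xi_i^{\alpha+\beta-2}$ is nonzero, so the solution is unique.

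Finally I would substitute $c_1$ back into $u(t)$ and collect everything under one integral $\int_0^{+\infty}(\cdots)h(s)\,ds$. The Duhamel kernel $(t-s)^{\alpha-1}$ is present only for $s \leq t$, the term coming from $\int_0^{+\infty} h$ is present for all $s$, and each summand $\eta_i(\xi_i-s)^{\alpha+\beta-2}$ is present only for $s \leq \xi_i$; sorting into the four cases according to whether $s$ lies below or above $t$ and below or above the $\xi_i$ produces exactly the piecewise formula \eqref{eq2.3}, while reading the construction backwards confirms that this $u$ solves \eqref{eq2.1}. I expect the main obstacle to be the third step, specifically computing $I_{0^+}^\beta u'(\xi_i)$ correctly, since it requires chaining the composition rule with the fractional integral of the power $\tau^{\alpha-2}$ and simplifying via $(\alpha-1)\Gamma(\alpha-1) = \Gamma(\alpha)$; the subsequent bookkeeping that reassembles the four branches of $G(t,s)$ is routine but must be done carefully so the indicator regions match.
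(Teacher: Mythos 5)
Your proposal is correct and follows essentially the same route as the paper: apply Lemma \ref{lem2.3} to get $u(t)=-I_{0^+}^{\alpha}h(t)+c_1t^{\alpha-1}+c_2t^{\alpha-2}+c_3t^{\alpha-3}$, kill $c_2,c_3$ via $u(0)=u'(0)=0$, determine $c_1$ from the nonlocal condition (your computation of $D_{0^+}^{\alpha-1}u(+\infty)$ and $I_{0^+}^{\beta}u'(\xi_i)$ reproduces exactly the paper's value of $c_1$), and reassemble into the piecewise kernel \eqref{eq2.3}. You actually supply more of the intermediate fractional-calculus bookkeeping than the paper does, which states $c_1$ without derivation.
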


\begin{proof}
	According to Lemma \ref{lem2.3}, the solution of \eqref{eq2.1} can be written as $$ u(t)=-I_{0^+}^\alpha h(t)+c_{1}t^{\alpha - 1}+c_{2}t^{\alpha - 2}+c_{3}t^{\alpha - 3}. $$
	By the boundary conditions of \eqref{eq2.1}, we know that $ c_{2}=0 $,  $ c_{3}=0 $.
	
	On the other hand, by $D_{0^+}^{\alpha-1} u(+\infty) = \displaystyle\sum_{i=1}^{m-2} \eta_i I_{0^+}^\beta u'(\xi_i)$, we have
	$$c_{1}=\ds\frac{\Gamma(\alpha+\beta-1)\ds\int_{0}^{+\infty}h(s)ds-\ds\sum_{i=1}^{m-2}\eta_{i}\ds\int_{0}^{\xi_i}(\xi_{i}-s)^{\alpha+\beta-2}h(s)ds}{\Gamma(\alpha)[\Gamma(\alpha+\beta-1)-\ds\sum_{i=1}^{m-2}\eta_{i}\xi_{i}^{\alpha+\beta-2}]}.$$ 
	Therefore, the unique solution of fractional BVP \eqref{eq2.1} is 
	\begin{eqnarray*}
		&u(t)&=-\ds\frac{1}{\Gamma(\alpha)}\ds\int_{0}^{t}(t-s)^{\alpha - 1}h(s)ds+\ds\frac{\Gamma(\alpha+\beta-1)t^{\alpha-1}}{\Gamma(\alpha)[\Gamma(\alpha+\beta-1)-\ds\sum_{i=1}^{m-2}\eta_{i}\xi_{i}^{\alpha+\beta-2}]}\ds\int_{0}^{+\infty}h(s)ds\\
		& \ \ & \ \ -\ds\frac{\ds\sum_{i=1}^{m-2}\eta_{i}t^{\alpha - 1}    }{\Gamma(\alpha)[\Gamma(\alpha+\beta-1)-\ds\sum_{i=1}^{m-2}\eta_{i}\xi_{i}^{\alpha+\beta-2}]}\ds\int_{0}^{\xi_{i}}(\xi_{i}-s)^{\alpha+\beta-2}h(s)ds\\
		& \ \ &
		=\displaystyle\int_{0}^{+\infty}G(t,s)h(s)ds,
	\end{eqnarray*}
	where $G(t,s)$ is defined by \eqref{eq2.3}.
\end{proof}

\begin{lemma}\label{lem2.7}
	BVP \eqref{eq1.1} is equivalent to the integral equation 
	\begin{equation}\label{eq2.4}
	u(t)=\displaystyle\int_{0}^{+\infty}G(t,s)\phi_{q}\left(\dfrac{1}{\Gamma(\gamma)}\ds\int_{0}^{s}(s-\tau)^{\gamma-1}a(\tau)f(\tau,u(\tau),u'(\tau))d\tau\right) ds,
	\end{equation}
	where $G(t,s)$ is defined by \eqref{eq2.3}.
\end{lemma}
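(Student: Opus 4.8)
The plan is to strip off the outer fractional operators one at a time, reducing BVP \eqref{eq1.1} to the form already solved in Lemma \ref{lem2.6}, and then to quote that lemma. Set $v(t)=\phi_{p}\bigl(D_{0^+}^{\alpha}u(t)\bigr)$, so that the differential equation in \eqref{eq1.1} becomes $D_{0^+}^{\gamma}v(t)+a(t)f(t,u(t),u'(t))=0$ with $0<\gamma\le 1$. Since $af(\cdot,u,u')\in C[0,+\infty)$ by $(H2)$ and $(H3)$, I would apply $I_{0^+}^{\gamma}$ to both sides and use Lemma \ref{lem2.3}, which for $0<\gamma\le1$ introduces a single free constant, to get
\begin{equation*}
v(t)=-\frac{1}{\Gamma(\gamma)}\int_{0}^{t}(t-s)^{\gamma-1}a(s)f(s,u(s),u'(s))\,ds+c\,t^{\gamma-1}.
\end{equation*}

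Next I would pin down $c$ from the boundary condition $D_{0^+}^{\alpha}u(t)|_{t=0}=0$, which states exactly that $v(0)=\phi_{p}(0)=0$. Letting $t\to0^{+}$, the integral term vanishes, so the requirement $v(0)=0$ forces $c=0$; hence $\phi_{p}(D_{0^+}^{\alpha}u(t))=-\tfrac{1}{\Gamma(\gamma)}\int_{0}^{t}(t-s)^{\gamma-1}a(s)f(s,u(s),u'(s))\,ds$. Applying the odd function $\phi_{q}=\phi_{p}^{-1}$ and writing
\begin{equation*}
h(t):=\phi_{q}\!\left(\frac{1}{\Gamma(\gamma)}\int_{0}^{t}(t-s)^{\gamma-1}a(s)f(s,u(s),u'(s))\,ds\right),
\end{equation*}
I would obtain $D_{0^+}^{\alpha}u(t)+h(t)=0$, together with the remaining conditions $u(0)=u'(0)=0$ and $\lim_{t\to+\infty}D_{0^+}^{\alpha-1}u(t)=\sum_{i=1}^{m-2}\eta_{i}I_{0^+}^{\beta}u'(\xi_{i})$.

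It then remains to check that $h$ satisfies the hypotheses of Lemma \ref{lem2.6} and to apply it. Continuity of $h$ on $[0,+\infty)$ follows from continuity of $a$, $f$, and $\phi_{q}$, while integrability $\int_{0}^{+\infty}h(s)\,ds<\infty$ is precisely condition $(H3)$ (with $(H2)$ used to control the boundedness of $f$ on bounded sets). Thus $u$ solves BVP \eqref{eq2.1} with this $h$, and Lemma \ref{lem2.6} yields $u(t)=\int_{0}^{+\infty}G(t,s)h(s)\,ds$, which is exactly \eqref{eq2.4}. For the converse implication, I would begin with \eqref{eq2.4}, apply $D_{0^+}^{\alpha}$ to recover $D_{0^+}^{\alpha}u=-h$, then apply $\phi_{p}$ followed by $D_{0^+}^{\gamma}$ to regenerate the differential equation, and verify the three boundary conditions directly using the structure of $G$ recorded in Lemma \ref{lem2.6}.

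The step I expect to be the main obstacle is the elimination of the constant $c$: making the deduction ``$v(0)=0\Rightarrow c=0$'' rigorous requires care about the space in which $v=\phi_{p}(D_{0^+}^{\alpha}u)$ lives and about how the value $v(0)$ is interpreted when $\gamma<1$, since then $c\,t^{\gamma-1}$ is unbounded near $0$ rather than merely nonzero. A secondary technical point is confirming that $h$ is continuous up to $t=0$ and globally integrable, so that Lemma \ref{lem2.6} applies verbatim on the half-line.
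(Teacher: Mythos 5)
Your proposal follows essentially the same route as the paper's own proof: use Lemma \ref{lem2.3} to write $\phi_{p}(D_{0^+}^{\alpha}u(t))$ with a free term $c\,t^{\gamma-1}$, eliminate $c$ via the condition $D_{0^+}^{\alpha}u(t)|_{t=0}=0$, apply $\phi_{q}$ to reduce to the problem of Lemma \ref{lem2.6}, and then invoke that lemma. The technical caveats you flag (interpreting $v(0)=0$ when $\gamma<1$, and verifying continuity and integrability of $h$) are real but are equally left implicit in the paper, so your argument is correct and matches the published one.
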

\begin{proof}
	By the BVP \eqref{eq1.1} and Lemma \ref{lem2.3}, we have 
	$$ \phi_{p}\left( D_{0^+}^\alpha u(t) \right)=ct^{\gamma-1}-\dfrac{1}{\Gamma(\gamma)}\ds\int_{0}^{t}(t-s)^{\gamma-1}a(s)f(s,u(s),u'(s))ds. $$
	Together with $D_{0^+}^\alpha u(t)|_{t=0} = 0,$ there is $c=0,$ and then
	$$ D_{0^+}^\alpha u(t)=-\phi_{q}\left( \dfrac{1}{\Gamma(\gamma)}\ds\int_{0}^{t}(t-s)^{\gamma-1}a(s)f(s,u(s),u'(s))ds\right) . $$
	Therefore, BVP \eqref{eq1.1} is equivalent to the following problem
	\begin{eqnarray*}
		\begin{cases}
			D_{0^+}^\alpha u(t)+\phi_{q}\left( \dfrac{1}{\Gamma(\gamma)}\ds\int_{0}^{t}(t-s)^{\gamma-1}a(s)f(s,u(s),u'(s))ds\right)=0, \ \ t\in \left[ 0, +\infty\right),\\
			u(0)=u'(0)=0,\\
			D_{0^+}^{\alpha-1} u(+\infty) = \displaystyle\sum_{i=1}^{m-2} \eta_i I_{0^+}^\beta u'(\xi_i).
		\end{cases} 
	\end{eqnarray*}
	By Lemma \ref{lem2.6}, BVP \eqref{eq1.1} is equivalent to the integral equation \eqref{eq2.4}. The proof is complete.
\end{proof}

\begin{lemma}\label{lem2.8}
	If $(H1)$ holds, then for all $s,\ t\geq0$ we have
	\begin{eqnarray*}
		0\leq\displaystyle\frac {G(t,s)}{1+t^{\alpha-1}}\leq L, \ \ \ \ \ \ 0\leq\displaystyle\frac {G_{t}(t,s)}{1+t^{\alpha-1}}\leq (\alpha -1)L,
	\end{eqnarray*}\\
	where
	\begin{eqnarray}\label{eq.6}
	L=\displaystyle\frac{\Gamma(\alpha+\beta-1)}{\Gamma(\alpha)[  \Gamma(\alpha+\beta-1)-\displaystyle\sum_{i=1}^{m-2}\eta_i\xi_i^{\alpha+\beta-2}]}.
	\end{eqnarray}
\end{lemma}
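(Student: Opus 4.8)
The plan is to reduce both two-sided estimates to the single pointwise inequality $0\le G(t,s)\le L\,t^{\alpha-1}$ (together with its analogue for $G_t$) and then divide by $1+t^{\alpha-1}$. Write $D:=\Gamma(\alpha+\beta-1)-\ds\sum_{i=1}^{m-2}\eta_i\xi_i^{\alpha+\beta-2}$, so that $(H1)$ gives $0<D\le\Gamma(\alpha+\beta-1)$ and hence the common denominator $\Gamma(\alpha)D$ appearing in every branch of \eqref{eq2.3} is strictly positive; observe that $L=\Gamma(\alpha+\beta-1)/(\Gamma(\alpha)D)$. Once $G(t,s)\le L\,t^{\alpha-1}$ is established on each of the four pieces, the upper bound on $G/(1+t^{\alpha-1})$ follows at once from $t^{\alpha-1}/(1+t^{\alpha-1})\le 1$.

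For the upper bound on $G$ I would treat the four branches uniformly. In each case the numerator is $\Gamma(\alpha+\beta-1)\,t^{\alpha-1}$ minus some of the manifestly nonnegative terms $\ds\sum_{i=1}^{m-2}\eta_i(\xi_i-s)^{\alpha+\beta-2}\,t^{\alpha-1}$ and $D\,(t-s)^{\alpha-1}$ (the first present only when the branch forces $\xi_i-s\ge 0$, the second only when it forces $t-s\ge 0$). Discarding these nonnegative pieces leaves $G(t,s)\le\Gamma(\alpha+\beta-1)\,t^{\alpha-1}/(\Gamma(\alpha)D)=L\,t^{\alpha-1}$, with equality on the top branch $s\ge\max\{t,\xi_i\}$.

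For nonnegativity the two elementary monotonicities $(\xi_i-s)^{\alpha+\beta-2}\le\xi_i^{\alpha+\beta-2}$ (valid since $\alpha+\beta-2>0$ and $0\le\xi_i-s\le\xi_i$) and $(t-s)^{\alpha-1}\le t^{\alpha-1}$ do the work. On the two branches with $s\le\xi_i$, the first monotonicity together with $(H1)$ forces the coefficient $\Gamma(\alpha+\beta-1)-\ds\sum_{i=1}^{m-2}\eta_i(\xi_i-s)^{\alpha+\beta-2}$ to be at least $D>0$, hence positive. The only branch carrying \emph{both} subtracted terms is $s\le\min\{t,\xi_i\}$, and there I would first bound that coefficient below by $D$ and then factor, obtaining numerator $\ge D\big(t^{\alpha-1}-(t-s)^{\alpha-1}\big)\ge 0$; combining the two reductions in this mixed branch is the main (though still routine) obstacle.

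The derivative estimate is obtained by differentiating each branch in $t$, which merely replaces $t^{\alpha-1}$ and $(t-s)^{\alpha-1}$ by $(\alpha-1)t^{\alpha-2}$ and $(\alpha-1)(t-s)^{\alpha-2}$ while leaving the algebraic structure intact; repeating verbatim the two arguments above (now using $(t-s)^{\alpha-2}\le t^{\alpha-2}$, legitimate because $\alpha-2>0$) yields $0\le G_t(t,s)\le(\alpha-1)L\,t^{\alpha-2}$ on each piece. To close, I would verify the normalizing inequality $t^{\alpha-2}/(1+t^{\alpha-1})\le 1$ by splitting at $t=1$: for $t\le 1$ one has $t^{\alpha-2}\le 1\le 1+t^{\alpha-1}$, while for $t\ge 1$ one has $t^{\alpha-2}\le t^{\alpha-1}\le 1+t^{\alpha-1}$, since $0<\alpha-2<\alpha-1$. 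Dividing the derivative bound by $1+t^{\alpha-1}$ and applying this inequality gives the stated estimate $G_t/(1+t^{\alpha-1})\le(\alpha-1)L$, completing the proof.
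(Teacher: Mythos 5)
Your proposal is correct and follows essentially the same route as the paper: a branch-by-branch analysis of \eqref{eq2.3} and \eqref{eq2.5} using $(H1)$, the monotonicities $(\xi_i-s)^{\alpha+\beta-2}\le\xi_i^{\alpha+\beta-2}$ and $(t-s)^{\alpha-1}\le t^{\alpha-1}$, and finally $t^{\alpha-1}/(1+t^{\alpha-1})\le 1$ and $t^{\alpha-2}/(1+t^{\alpha-1})\le 1$. Your organization (first proving $G(t,s)\le L\,t^{\alpha-1}$ and $G_t(t,s)\le(\alpha-1)L\,t^{\alpha-2}$, then dividing) is a mild streamlining of the paper's in-place rearrangement, and your explicit verification of $t^{\alpha-2}/(1+t^{\alpha-1})\le1$ is a small point the paper leaves implicit, but the substance is identical.
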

\begin{proof}
	Simple computations give
	{\small \begin{eqnarray}\label{eq2.5}
		G_{t}(t,s)=
		\begin{cases}
		\ds\frac{[\Gamma(\alpha + \beta - 1)-\ds\sum_{i=1}^{m-2}\eta_{i}(\xi_{i}-s)^{\alpha + \beta - 2} ]t^{\alpha - 2}-[\Gamma(\alpha + \beta - 1)-\ds\sum_{i=1}^{m-2}\eta_{i}{\xi_{i}}^{\alpha + \beta - 2} ](t-s)^{\alpha - 2}}{\Gamma(\alpha - 1)[\Gamma(\alpha + \beta - 1)-\ds\sum_{i=1}^{m-2}\eta_{i}{\xi_{i}}^{\alpha + \beta - 2}]},
		& s\leq \min\{t, \xi_{i}\},\\
		\ds\frac{[\Gamma(\alpha + \beta - 1)-\ds\sum_{i=1}^{m-2}\eta_{i}(\xi_{i}-s)^{\alpha + \beta - 2} ]t^{\alpha-2}}{\Gamma(\alpha-1)[\Gamma(\alpha + \beta - 1)-\ds\sum_{i=1}^{m-2}\eta_{i}{\xi_{i}}^{\alpha + \beta - 2}]},
		& 0\leq t\leq s\leq \xi_{i},\\
		\ds\frac{\Gamma(\alpha + \beta - 1)t^{\alpha-2}-[\Gamma(\alpha + \beta - 1)-\ds\sum_{i=1}^{m-2}\eta_{i}{\xi_{i}}^{\alpha + \beta - 2} ](t-s)^{\alpha - 2}}{\Gamma(\alpha - 1)[\Gamma(\alpha + \beta - 1)-\ds\sum_{i=1}^{m-2}\eta_{i}{\xi_{i}}^{\alpha + \beta - 2}]},
		& 0\leq \xi_{i}\leq s\leq t,\\
		\ds\frac{\Gamma(\alpha + \beta - 1)t^{\alpha - 2}}{\Gamma(\alpha - 1)[\Gamma(\alpha + \beta - 1)-\ds\sum_{i=1}^{m-2}\eta_{i}{\xi_{i}}^{\alpha + \beta - 2}]},
		& s\geq \max\{t, \xi_{i}\}.
		\end{cases}
		\end{eqnarray}}
	Let us consider the case $s\leq \min\{t, \xi_{i}\},$ then we get
	
	\begin{eqnarray*}
	G(t,s)&=&\ds\frac{[\Gamma(\alpha+\beta-1)-\ds\sum_{i=1}^{m-2}\eta_{i}(\xi_{i}-s)^{\alpha+\beta-2} ]t^{\alpha-1}-[\Gamma(\alpha+\beta-1)-\ds\sum_{i=1}^{m-2}\eta_{i}{\xi_{i}}^{\alpha+\beta-2} ](t-s)^{\alpha-1}}{\Gamma(\alpha)[\Gamma(\alpha+\beta-1)-\ds\sum_{i=1}^{m-2}\eta_{i}{\xi_{i}}^{\alpha+\beta-2}]}\\ \\ & \ \ &
	\geq \ds\frac{[\Gamma(\alpha+\beta-1)-\ds\sum_{i=1}^{m-2}\eta_{i}{\xi_{i}}^{\alpha+\beta-2}](t^{\alpha-1}-(t-s)^{\alpha-1})}{\Gamma(\alpha)[\Gamma(\alpha+\beta-1)-\ds\sum_{i=1}^{m-2}\eta_{i}{\xi_{i}}^{\alpha+\beta-2}]}\\ \\ & \ \ &
	=\ds\frac{t^{\alpha-1}-(t-s)^{\alpha-1}}{\Gamma(\alpha)}\\ \\ & \ \ & \geq 0,
\end{eqnarray*}
	\begin{eqnarray*}
	G_{t}(t,s)&=&\ds\frac{[\Gamma(\alpha+\beta-1)-\ds\sum_{i=1}^{m-2}\eta_{i}(\xi_{i}-s)^{\alpha+\beta-2} ]t^{\alpha-2}-[\Gamma(\alpha+\beta-1)-\ds\sum_{i=1}^{m-2}\eta_{i}{\xi_{i}}^{\alpha+\beta-2} ](t-s)^{\alpha-2}}{\Gamma(\alpha-1)[\Gamma(\alpha+\beta-1)-\ds\sum_{i=1}^{m-2}\eta_{i}{\xi_{i}}^{\alpha+\beta-2}]}
\end{eqnarray*}

\begin{eqnarray*}
& \ \ &
	\geq \ds\frac{[\Gamma(\alpha+\beta-1)-\ds\sum_{i=1}^{m-2}\eta_{i}{\xi_{i}}^{\alpha+\beta-2}](t^{\alpha-2}-(t-s)^{\alpha-2})}{\Gamma(\alpha-1)[\Gamma(\alpha+\beta-1)-\ds\sum_{i=1}^{m-2}\eta_{i}{\xi_{i}}^{\alpha+\beta-2}]}\\ \\ & \ \ &
	=\ds\frac{t^{\alpha-2}-(t-s)^{\alpha-2}}{\Gamma(\alpha-1)}\\ \\ & \ \ & \geq 0.
\end{eqnarray*}	
If $ s\leq \min\{t, \xi_{i}\}$ then
\begin{eqnarray*}
	&\ds\frac{G(t,s)}{1+t^{\alpha-1}} & \leq \ds\frac{\Gamma(\alpha+\beta-1)(t^{\alpha-1}-(t-s)^{\alpha-1})+\ds\sum_{i=1}^{m-2}\eta_{i}{\xi_{i}}^{\alpha+\beta-2}(t-s)^{\alpha-1} }{(1+t^{\alpha-1})\Gamma(\alpha)[\Gamma(\alpha+\beta-1)-\ds\sum_{i=1}^{m-2}\eta_{i}{\xi_{i}}^{\alpha+\beta-2}]}\\ \\ & \ \ &
	\leq \ds\frac{\Gamma(\alpha+\beta-1)\ds\frac{t^{\alpha-1}}{1+t^{\alpha-1}}+[-\Gamma(\alpha+\beta-1)+\ds\sum_{i=1}^{m-2}\eta_{i}{\xi_{i}}^{\alpha+\beta-2}]\ds\frac{(t-s)^{\alpha-1}}{1+t^{\alpha-1}}}{\Gamma(\alpha)[\Gamma(\alpha+\beta-1)-\ds\sum_{i=1}^{m-2}\eta_{i}{\xi_{i}}^{\alpha+\beta-2}]}\\ \\
	& \ \ & 	
	\leq \ds\frac{\Gamma(\alpha+\beta-1)}{\Gamma(\alpha)[\Gamma(\alpha+\beta-1)-\ds\sum_{i=1}^{m-2}\eta_{i}{\xi_{i}}^{\alpha+\beta-2}]}-\ds\frac{\ds\frac{(t-s)^{\alpha-1}}{1+t^{\alpha-1}}}{\Gamma(\alpha)}\\ \\ & \ \ & 	
	\leq
	\ds\frac{\Gamma(\alpha+\beta-1)}{\Gamma(\alpha)[\Gamma(\alpha+\beta-1)-\ds\sum_{i=1}^{m-2}\eta_{i}{\xi_{i}}^{\alpha+\beta-2}]}\\ \\ & \ \ &
	=L,
\end{eqnarray*}
\begin{eqnarray*}
	&\ds\frac{G_{t}(t,s)}{1+t^{\alpha-1}}&\leq \ds\frac{\Gamma(\alpha+\beta-1)(t^{\alpha-2}-(t-s)^{\alpha-2})+\ds\sum_{i=1}^{m-2}\eta_{i}{\xi_{i}}^{\alpha+\beta-2}(t-s)^{\alpha-2 }}{(1+t^{\alpha-1})\Gamma(\alpha-1)[\Gamma(\alpha+\beta-1)-\ds\sum_{i=1}^{m-2}\eta_{i}{\xi_{i}}^{\alpha+\beta-2}]}	\\ \\ & \ \ &
	\leq \ds\frac{\Gamma(\alpha+\beta-1)\ds\frac{t^{\alpha-2}}{1+t^{\alpha-1}}+[-\Gamma(\alpha+\beta-1)+\ds\sum_{i=1}^{m-2}\eta_{i}{\xi_{i}}^{\alpha+\beta-2}]\ds\frac{(t-s)^{\alpha-2}}{1+t^{\alpha-1}}}{\Gamma(\alpha-1)[\Gamma(\alpha+\beta-1)-\ds\sum_{i=1}^{m-2}\eta_{i}{\xi_{i}}^{\alpha+\beta-2}]}
\end{eqnarray*}
\begin{eqnarray*}	
\\ \\	
	& \ \ & \leq\ds\frac{\Gamma(\alpha+\beta-1)}{\Gamma(\alpha-1)[\Gamma(\alpha+\beta-1)-\ds\sum_{i=1}^{m-2}\eta_{i}{\xi_{i}}^{\alpha+\beta-2}]}-\ds\frac{\ds\frac{(t-s)^{\alpha-2}}{1+t^{\alpha-1}}}{\Gamma(\alpha-1)}\\ \\ & \ \ & 	
	\leq\ds\frac{\Gamma(\alpha+\beta-1)}{\Gamma(\alpha-1)[\Gamma(\alpha+\beta-1)-\ds\sum_{i=1}^{m-2}\eta_{i}{\xi_{i}}^{\alpha+\beta-2}]}\\ \\ & \ \ & = (\alpha-1)L.
\end{eqnarray*}
Applying the same techniques to the other cases, the conclusion follows.
\end{proof}	
\indent In this paper, we will use the Banach space $E$ defined by	
\begin{eqnarray*}
E= \left\lbrace u\in \mathscr{C^{1}}(\mathbb{R^+},\mathbb{R^+}) : 	\ds\lim_{t \rightarrow \infty} \displaystyle\frac{|u(t)|}{1+t^{\alpha-1}}< \infty, \ds\lim_{t \rightarrow \infty} \displaystyle\frac{|u'(t)|}{1+t^{\alpha-1}}< \infty\right\rbrace.
\end{eqnarray*}
are equipped with the norm $\|u\|=\ds\max\left\lbrace \|u\|_\infty, \|u'\|_\infty \right\rbrace $, where $\|u\|_\infty = \ds\sup_{t\geq 0}\displaystyle\frac{|u(t)|}{1+t^{\alpha-1}}$ and $\mathbb{R^+}=[0, \infty).$ Applying some standard arguments about properties of a given Banach space, we can show that $E$ is Banach spaces. Basically in this paper, we use the Banach space $E$ defined above.
We introduce an operator $T: E\rightarrow E$ as follows 
\begin{eqnarray}\label{eq2.6}
(Tu)(t)=\displaystyle\int_{0}^{+\infty}G(t,s)\phi_{q}\left(\dfrac{1}{\Gamma(\gamma)}\ds\int_{0}^{s}(s-\tau)^{\gamma-1}a(\tau)f(\tau,u(\tau),u'(\tau))d\tau\right)ds,
\end{eqnarray}
where $G(t,s)$ is defined by \eqref{eq2.3}.\\
\indent It can be said that $u$ is a solution of the fractional BVP \eqref{eq1.1} if and only if $u$ is a
fixed point of the operator $T$ on $E.$ The following fixed point theorem is fundamental and essential to the proofs of
our main results.
\begin{theorem}[Leray-Schauder Nonlinear Alternative Theorem]\label{teo2.9}\cite{book.6}
	Let $ C $ be a convex subset of a Banach space, $ U $ be a open subset of $C$ with $0 \in U$. Then every completely continuous map $ T: \bar{U}\rightarrow C $ has at least one of the two following properties:\\
	\indent\textbf{$ (E_1) $} There exist an $ u\in \bar{U} $ such that $ Tu=u $.\\
	\indent\textbf{$ (E_2) $} There exist an $ u\in \partial{U} $ and $\lambda\in(0,1)$ such that $ u=\lambda Tu.$
\end{theorem}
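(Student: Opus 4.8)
The plan is to prove the dichotomy in contrapositive form: assuming that $(E_1)$ fails — that is, $Tu \neq u$ for every $u \in \bar U$ — I will produce a point $u \in \partial U$ and a scalar $\lambda \in (0,1)$ with $u = \lambda Tu$, which is exactly $(E_2)$. The whole argument reduces the nonlinear alternative to Schauder's fixed point theorem, so the first thing I would record is that $T$ being completely continuous means $T$ is continuous with $\overline{T(\bar U)}$ compact; by Mazur's theorem the closed convex hull $K := \overline{\operatorname{conv}}(\{0\} \cup T(\bar U))$ is then a compact convex subset of $C$ (here I use that $0 \in U \subseteq C$ and $C$ is convex).

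The central object is the set of ``partial fixed points''
$$A := \{ u \in \bar U : u = \lambda Tu \text{ for some } \lambda \in [0,1]\}.$$
First I would check that $A$ is nonempty and closed: it contains $0$ (take $\lambda = 0$), and if $u_n \in A$ with $u_n = \lambda_n T u_n \to u \in \bar U$, then passing to a convergent subsequence $\lambda_{n_k} \to \lambda \in [0,1]$ and using continuity of $T$ gives $u = \lambda Tu$, so $u \in A$. The crux is the claim $A \cap \partial U \neq \emptyset$. Suppose not. Then $A$ and $\partial U$ are disjoint closed subsets of the metric space $C$, so Urysohn's lemma supplies a continuous $\varphi : \bar U \to [0,1]$ with $\varphi \equiv 1$ on $A$ and $\varphi \equiv 0$ on $\partial U$ (concretely $\varphi(x) = d(x,\partial U)/(d(x,A)+d(x,\partial U))$). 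I would then define $N : C \to C$ by $N(x) = \varphi(x)Tx$ for $x \in \bar U$ and $N(x) = 0$ for $x \in C \setminus \bar U$; the two formulas agree on $\partial U$, so $N$ is continuous, and $N(C) \subseteq K$, whence $N$ restricts to a continuous self-map of the compact convex set $K$.

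Schauder's fixed point theorem then yields $x^* \in K$ with $N(x^*) = x^*$, and I would split into two cases. If $x^* \notin \bar U$ then $x^* = N(x^*) = 0$, but $0 \in U \subseteq \bar U$, a contradiction; hence $x^* \in \bar U$ and $x^* = \varphi(x^*)Tx^*$, so $x^* \in A$ and therefore $\varphi(x^*) = 1$, giving $x^* = Tx^*$ — impossible since $(E_1)$ was assumed to fail. This contradiction forces $A \cap \partial U \neq \emptyset$, so there exist $u \in \partial U$ and $\lambda \in [0,1]$ with $u = \lambda Tu$. Finally $\lambda \neq 1$ (else $u$ is a fixed point) and $\lambda \neq 0$ (else $u = 0 \in U$, contradicting $u \in \partial U$, since $U$ is open and hence disjoint from its boundary), so $\lambda \in (0,1)$ and $(E_2)$ holds. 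The main obstacle is precisely the passage from the scalar-deformed problem to an honest self-map of a compact convex set: it is the Urysohn cut-off $\varphi$ together with the compact convex envelope $K$ that makes Schauder's theorem applicable, and checking the continuity and range of $N$ at the seam $\partial U$ is the delicate point.
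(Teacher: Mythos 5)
The paper does not prove this theorem at all: it is quoted verbatim from the cited monograph of Agarwal, O'Regan and Meehan and used as a black box in Section 3, so there is no in-paper argument to compare yours against. Your proof is the standard derivation of the nonlinear alternative from Schauder's fixed point theorem, and it is correct: the set $A=\{u\in\bar U: u=\lambda Tu,\ \lambda\in[0,1]\}$ is nonempty and closed, the Urysohn cut-off $\varphi(x)=d(x,\partial U)/\bigl(d(x,A)+d(x,\partial U)\bigr)$ is well defined precisely when $A\cap\partial U=\emptyset$, the pasting lemma gives continuity of $N$ across $\partial U=\bar U\setminus U$, Mazur's theorem makes $K$ compact convex, and the exclusion of $\lambda=0$ and $\lambda=1$ at the end is handled properly. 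The one point worth flagging is that the statement as reproduced in the paper only assumes $C$ convex, whereas your construction needs $K=\overline{\operatorname{conv}}(\{0\}\cup T(\bar U))\subseteq C$ in order to restrict $N$ to $K$; this holds if $C$ is closed (which is the hypothesis in the source the paper cites, and is silently dropped in the paper's transcription), so the gap is in the quoted statement rather than in your argument.
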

As a result of noncompactness of half line $[0,\infty)$, the $ Arzela-Ascoli $ theorem fails to work in space $ E. $ Thus in order to show the compactness of the operator $ T $ defined by \eqref{eq2.6}, we need to represent to following modified compactness criterion.
\begin{lemma}\label{lem2.10}\cite{book.2}
	Let $ V=\left\lbrace u\in C_{\infty}, \|u\|<l, where\ \ l>0\right\rbrace,$ $V(t)=\{\dfrac{u(t)}{1+t^{\alpha-1}}, u\in V\}, $ $V'(t)=\{\dfrac{u'(t)}{1+t^{\alpha-1}}, u\in V\}.$ $V$ is relatively compact in $E,$ if \ $ V(t) $ and $ V'(t) $ are both equicontinuous on any finite subinterval of $\mathbb{R^+}$ and equiconvergent at $\infty,$ that is  for any $ \epsilon>0 $, there exists $ \eta=\eta(\epsilon)>0 $ such that
	\begin{eqnarray*}
		\left|\ds\frac{u(t_{1})}{1+t_{1}^{\alpha-1}}-\ds\frac{u(t_{2})}{1+t_{2}^{\alpha-1}} \right|<\epsilon, \ \  \left|\ds\frac{u'(t_{1})}{1+t_{1}^{\alpha-1}}-\ds\frac{u'(t_{2})}{1+t_{2}^{\alpha-1}} \right|<\epsilon,
	\end{eqnarray*}
	$ \forall u\in V,\ \ t_{1}, t_{2}\geq \eta $ (uniformly according to u).
\end{lemma}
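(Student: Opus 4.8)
The plan is to prove this modified Arzel\`a--Ascoli criterion by reducing the non-compact problem on $\mathbb{R^+}$ to a family of compact problems on finite intervals and then using the equiconvergence at $\infty$ to patch the local information into global uniform control. Since $E$ is complete, it suffices to show that every sequence $\{u_n\}\subset V$ admits a subsequence that is Cauchy in $\|\cdot\|$. Throughout I write $w_n(t)=\frac{u_n(t)}{1+t^{\alpha-1}}$ and $v_n(t)=\frac{u_n'(t)}{1+t^{\alpha-1}}$, so that $\|u_n\|=\max\{\sup_{t\geq0}|w_n(t)|,\sup_{t\geq0}|v_n(t)|\}$; the hypotheses then say precisely that $\{w_n\}$ and $\{v_n\}$ are uniformly bounded (by $l$), equicontinuous on each $[0,N]$, and equiconvergent at $\infty$.

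First I would extract a locally uniformly convergent subsequence. On each fixed interval $[0,N]$ the families $\{w_n\}$ and $\{v_n\}$ are uniformly bounded and equicontinuous, so the classical Arzel\`a--Ascoli theorem yields a subsequence converging uniformly on $[0,N]$. Running this over $N=1,2,3,\dots$ and passing to a diagonal subsequence produces a single subsequence (not relabelled) along which $w_n\to w$ and $v_n\to v$ uniformly on every compact subinterval of $\mathbb{R^+}$.

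Next I would promote local uniform convergence to global uniform convergence using the equiconvergence hypothesis, via an $\varepsilon/3$ argument. Given $\varepsilon>0$, choose $\eta=\eta(\varepsilon)$ as in the equiconvergence condition, so that $|w_n(t)-w_n(\eta)|<\varepsilon/3$ for all $n$ and all $t\geq\eta$. For $t\geq\eta$ one bounds $|w_n(t)-w_m(t)|$ by the triangle inequality through the common point $\eta$: the two tail terms are each $<\varepsilon/3$ by equiconvergence, while $|w_n(\eta)-w_m(\eta)|<\varepsilon/3$ for $n,m$ large by the convergence already established on the compact set $[0,\eta]$. On $[0,\eta]$ the subsequence is uniformly Cauchy by the previous step. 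Hence $\{w_n\}$ is uniformly Cauchy on all of $\mathbb{R^+}$, i.e. Cauchy in $\|\cdot\|_\infty$; the identical argument applies to $\{v_n\}$.

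Finally, combining the two estimates shows $\{u_n\}$ is Cauchy in $\|u\|=\max\{\|u\|_\infty,\|u'\|_\infty\}$, and completeness of $E$ delivers a limit $u\in E$; uniform convergence of both $w_n$ and $v_n$ guarantees that the limit of $v_n$ is indeed $\frac{u'}{1+t^{\alpha-1}}$, so $u\in\mathscr{C}^1$. This proves that $V$ is relatively compact. The main obstacle is exactly the passage in the third step: because $[0,\infty)$ is not compact, local uniform convergence alone is insufficient, and the equiconvergence at $\infty$ is the indispensable device that controls the tails uniformly in $n$ and allows the $\varepsilon/3$ splitting to close.
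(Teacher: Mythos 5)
Your proof is correct. Note, however, that the paper does not prove this lemma at all: it is quoted verbatim from Corduneanu's book (reference \cite{book.2}) as a known compactness criterion, so there is no proof in the paper to compare against. Your argument --- Arzel\`a--Ascoli on each $[0,N]$, a diagonal subsequence, and then the $\epsilon/3$ splitting through the point $\eta$ to upgrade local uniform convergence to a global Cauchy estimate, with completeness of $E$ and the standard theorem on differentiating uniform limits ensuring the limit lies in $\mathscr{C}^1$ --- is exactly the standard proof of this criterion and closes all the necessary gaps, in particular the tail control that the non-compactness of $[0,+\infty)$ makes indispensable.
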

\begin{lemma}\label{lem2.11}
	If conditions $(H1)$-$(H3)$ hold, then the operator $T: E\rightarrow E$ is completely continuous.
\end{lemma}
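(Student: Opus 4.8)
The plan is to verify the three defining properties of a completely continuous map in turn: that $T$ maps $E$ into itself, that $T$ is continuous, and that $T$ carries bounded sets to relatively compact sets. Throughout, the engine driving every estimate is Lemma~\ref{lem2.8}, which gives $0\le G(t,s)/(1+t^{\alpha-1})\le L$ and $0\le G_t(t,s)/(1+t^{\alpha-1})\le(\alpha-1)L$, combined with the integrability hypothesis $(H3)$ and the fact that $\phi_q$ is increasing and positively homogeneous of degree $q-1$, i.e.\ $\phi_q(Mx)=M^{q-1}\phi_q(x)$ for $M\ge 0$.

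First I would show $T:E\to E$ is well defined. Fix $u\in E$, so $\|u\|<\infty$. Writing
\[
f(\tau,u(\tau),u'(\tau))=f\!\left(\tau,(1+\tau^{\alpha-1})\tfrac{u(\tau)}{1+\tau^{\alpha-1}},(1+\tau^{\alpha-1})\tfrac{u'(\tau)}{1+\tau^{\alpha-1}}\right),
\]
hypothesis $(H2)$ shows $f$ is bounded along the orbit by some constant $M=M(\|u\|)<\infty$. Dividing \eqref{eq2.6} by $1+t^{\alpha-1}$ and applying the first bound of Lemma~\ref{lem2.8} then gives
\[
\frac{|(Tu)(t)|}{1+t^{\alpha-1}}\le L\,M^{q-1}\int_0^{+\infty}\phi_q\!\left(\frac{1}{\Gamma(\gamma)}\int_0^s (s-\tau)^{\gamma-1}a(\tau)\,d\tau\right)ds,
\]
which is finite by $(H3)$; the same estimate with $(\alpha-1)L$ in place of $L$ controls $|(Tu)'(t)|/(1+t^{\alpha-1})$. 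Since $G,G_t\ge 0$ and $f\ge 0$, both $Tu$ and $(Tu)'$ are nonnegative, and taking suprema and then limits as $t\to\infty$ confirms $Tu\in E$.

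Next I would establish continuity of $T$. Given $u_n\to u$ in $E$, the sequence is uniformly bounded, so by $(H2)$ the inner integrands $a(\tau)f(\tau,u_n(\tau),u_n'(\tau))$ admit a uniform bound; the continuity of $f$ and of $\phi_q$ then lets me pass to the limit inside both integrals by the Lebesgue dominated convergence theorem, with $(H3)$ supplying the integrable majorant. Estimating $\|Tu_n-Tu\|$ by the same bounds of Lemma~\ref{lem2.8} yields $\|Tu_n-Tu\|\to 0$.

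The main work---and the principal obstacle---is compactness, for which I would invoke the half-line criterion of Lemma~\ref{lem2.10}. For a bounded set $V=\{u:\|u\|<l\}$, uniform boundedness of $T(V)$ is immediate from the first step with $M=M(l)$. Equicontinuity of $\{(Tu)(t)/(1+t^{\alpha-1})\}$ and $\{(Tu)'(t)/(1+t^{\alpha-1})\}$ on each finite subinterval I would obtain from the explicit branches of $G$ and $G_t$ in \eqref{eq2.3}--\eqref{eq2.5}, using the uniform continuity of $t\mapsto G(t,s)/(1+t^{\alpha-1})$ on compacts together with the dominating integral from $(H3)$. The delicate point is equiconvergence at $\infty$, where the Arzela--Ascoli theorem genuinely fails: I must show that the weighted differences $\bigl|\tfrac{(Tu)(t_1)}{1+t_1^{\alpha-1}}-\tfrac{(Tu)(t_2)}{1+t_2^{\alpha-1}}\bigr|$ and their derivative analogue tend to $0$ uniformly in $u\in V$ as $t_1,t_2\to\infty$. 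This rests on the observation that, as $t\to\infty$, the weighted kernels $G(t,s)/(1+t^{\alpha-1})$ and $G_t(t,s)/(1+t^{\alpha-1})$ converge to $t$-independent limits---reading the fourth branches of \eqref{eq2.3} and \eqref{eq2.5}, one uses $t^{\alpha-1}/(1+t^{\alpha-1})\to 1$ and $t^{\alpha-2}/(1+t^{\alpha-1})\to 0$---so the kernels are uniformly Cauchy in $t$; combined with the $(H3)$-integrable majorant this delivers the required uniform smallness. With all hypotheses of Lemma~\ref{lem2.10} verified, $T(V)$ is relatively compact, and hence $T$ is completely continuous.
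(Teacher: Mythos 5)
Your proposal follows essentially the same route as the paper: continuity of $T$ via the dominated convergence theorem with the majorant supplied by $(H2)$--$(H3)$, and relative compactness of the image of a bounded set via the half-line criterion of Lemma~\ref{lem2.10} (uniform boundedness, equicontinuity on finite subintervals, equiconvergence at infinity), with every estimate driven by the kernel bounds of Lemma~\ref{lem2.8} and the homogeneity $\phi_q(Bx)=\phi_q(B)\phi_q(x)$. The one place you genuinely depart from the paper is the equiconvergence step: the paper's Step~3 only verifies that $\lim_{t\to+\infty}|Tu(t)|/(1+t^{\alpha-1})$ is finite, which by itself does not establish the uniform-in-$u$ Cauchy condition at infinity that Lemma~\ref{lem2.10} actually requires, whereas you argue from the fourth branches of \eqref{eq2.3} and \eqref{eq2.5} that the weighted kernels $G(t,s)/(1+t^{\alpha-1})$ and $G_t(t,s)/(1+t^{\alpha-1})$ tend to $t$-independent limits, and combine this with the $(H3)$ majorant. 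Your version is the more complete one on this point; to make it fully airtight you would still want the kernel convergence uniform in $s$ (or a splitting of the $s$-integral into a compact part and an $(H3)$-small tail), but that is precisely the detail the paper also glosses over, so nothing in your plan fails where the paper's succeeds.
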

\begin{proof}
	\indent In order to represent the proof, we divide it into the three steps as follows:\\
	\text{{Step 1}:} In this step show that integral operator $T: E\rightarrow E$ is continuous. Assume that $u_{n}$ be a sequence in $E$
	such that $u_n\rightarrow u$ and $u'_n\rightarrow u'$ as $n \rightarrow +\infty$. Thus there exist positive constant $ r_0 $ such that
	$$\ds\max \left\lbrace \left\|u\right\|_{\infty}, \ds\sup_{n\in \mathbb{N}}\left\|u_{n}\right\|_{\infty}\right\rbrace, \ds\max \left\lbrace \left\|u'\right\|_{\infty}, \ds\sup_{n\in \mathbb{N}}\left\|u'_{n}\right\|_{\infty}\right\rbrace< r_0.$$ 
	With the help of $ Lebesgue Dominated Convergence$ theorem and continuity of f, we conclude that 
	\begin{eqnarray*}
		&\displaystyle\int_{0}^{+\infty}&\phi_{q}\left(\dfrac{1}{\Gamma(\gamma)}\ds\int_{0}^{s}(s-\tau)^{\gamma-1}a(\tau)f(\tau,u_{n}(\tau),u_{n}'(\tau))d\tau\right)ds\\ \\ & \ \ & 
		\rightarrow \displaystyle\int_{0}^{+\infty}\phi_{q}\left(\dfrac{1}{\Gamma(\gamma)}\ds\int_{0}^{s}(s-\tau)^{\gamma-1}a(\tau)f(\tau,u(\tau),u'(\tau))d\tau\right)ds, \ \ n\rightarrow\infty.
	\end{eqnarray*}
	Therefore considering Lemma \ref{lem2.8}, we can get
	\begin{eqnarray*}
		\left\|Tu_{n}-Tu\right\|_{\infty}&
		\leq& L\bigg| \displaystyle\int_{0}^{+\infty}\phi_{q}\left(\dfrac{1}{\Gamma(\gamma)}\ds\int_{0}^{s}(s-\tau)^{\gamma-1}a(\tau)f(\tau,u_{n}(\tau),u_{n}'(\tau))d\tau\right)ds\\ \\& \ \ &
		-\displaystyle\int_{0}^{+\infty}\phi_{q}\left(\dfrac{1}{\Gamma(\gamma)}\ds\int_{0}^{s}(s-\tau)^{\gamma-1}a(\tau)f(\tau,u(\tau),u'(\tau))d\tau\right)ds\bigg|\\ \\ & \ \ &
		\rightarrow 0, \ \ n\rightarrow+\infty,
	\end{eqnarray*}
	\begin{eqnarray*}
		\left\|T'u_{n}-T'u\right\|_{\infty}&
		\leq& (\alpha-1)L\bigg| \displaystyle\int_{0}^{+\infty}\phi_{q}\left(\dfrac{1}{\Gamma(\gamma)}\ds\int_{0}^{s}(s-\tau)^{\gamma-1}a(\tau)f(\tau,u_{n}(\tau),u_{n}'(\tau))d\tau\right)ds\\ \\& \ \ &
		-\displaystyle\int_{0}^{+\infty}\phi_{q}\left(\dfrac{1}{\Gamma(\gamma)}\ds\int_{0}^{s}(s-\tau)^{\gamma-1}a(\tau)f(\tau,u(\tau),u'(\tau))d\tau\right)ds\bigg|\\ \\ & \ \ &
		\rightarrow 0, \ \ n\rightarrow+\infty.
	\end{eqnarray*}
	Therefore 
	\begin{eqnarray*}
		\left\|Tu_{n}-Tu\right\|\rightarrow 0, \ \ n\rightarrow+\infty.
	\end{eqnarray*}
	Thus, $T$ is continuous. \\
	\text{{Step 2}:} In order to prove the relatively compactness of operator $T: E\rightarrow E.$ From the definition of $E$, we can choose $r_0$ such that $\ds\sup_{u\in E}\|u\| <r_0.$ Let $$B_{r_0}=\ds\sup\{f(t,(1+t^{\alpha-1})u,(1+t^{\alpha-1})u'), \ \ (t,u,u')\in[0,+\infty)\times[0,r_0]\times[0,r_0]\}$$
	and $\Omega$ be any bounded subset of $E.$ Then there exists $r>0$ such that  $\left\|u\right\|\leq r$  for all $u\in\Omega$. Then using conditions $(H2), (H3)$ and Lemma \ref{lem2.8}, we have 
	\begin{eqnarray*}
		&\left\|Tu\right\|_{\infty}&\leq L\ds\int_{0}^{+\infty}\phi_{q}\left(\dfrac{1}{\Gamma(\gamma)}\ds\int_{0}^{s}(s-\tau)^{\gamma-1}a(\tau)f(\tau,u(\tau),u'(\tau))d\tau\right)ds\\ \\ & \ \ &
		\leq L\phi_{q}(B_r)\ds\int_{0}^{+\infty}\phi_{q}\left(\dfrac{1}{\Gamma(\gamma)}\ds\int_{0}^{s}(s-\tau)^{\gamma-1}a(\tau)d\tau\right) ds\\ \\
		& \ \
		&< +\infty,\ \ u\in \Omega.
	\end{eqnarray*}
	Similarly we can show that $\left\|(Tu)'\right\|_{\infty}<\infty $ for $ u\in \Omega. $ It show that $ T\Omega $ is uniformly bounded. Next, we show that $T\Omega$ is equicontinuous on $[0, +\infty)$. For any $a > 0$ and $t_1,\ t_2\in [0, a],$ without loss of generality, we may assume that $t_2 > t_1$. For all $ u\in \Omega$, we have
	
	\begin{eqnarray*}
		&\left|\ds\frac{(Tu)(t_2)}{1+t_{2}^{\alpha-1}}-\ds\frac{(Tu)(t_1)}{1+t_{1}^{\alpha-1}}\right| &\leq \ds\int_{0}^{+\infty}\left| \ds\frac{G(t_{2},s)}{1+t_{2}^{\alpha-1}}-\ds\frac{G(t_{1},s)}{1+t_{1}^{\alpha-1}}\right|\phi_{q}\left(\dfrac{1}{\Gamma(\gamma)}\ds\int_{0}^{s}(s-\tau)^{\gamma-1}a(\tau)f(\tau,u(\tau),u'(\tau))d\tau\right)ds\\ \\
		& \ \
		&\leq \ds\int_{0}^{+\infty}\left| \ds\frac{G(t_{2},s)}{1+t_{2}^{\alpha-1}}-\ds\frac{G(t_{1},s)}{1+t_{2}^{\alpha-1}}\right|\phi_{q}\left(\dfrac{1}{\Gamma(\gamma)}\ds\int_{0}^{s}(s-\tau)^{\gamma-1}a(\tau)f(\tau,u(\tau),u'(\tau))d\tau\right)ds\\ \\
		& \ \
		& \ \ +\ds\int_{0}^{+\infty}\left| \ds\frac{G(t_{1},s)}{1+t_{2}^{\alpha-1}}-\ds\frac{G(t_{1},s)}{1+t_{2}^{\alpha-1}}\right|\phi_{q}\left(\dfrac{1}{\Gamma(\gamma)}\ds\int_{0}^{s}(s-\tau)^{\gamma-1}a(\tau)f(\tau,u(\tau),u'(\tau))d\tau\right)ds\\ \\
		& \ \
		&\leq \ds\int_{0}^{+\infty}\left| \ds\frac{G(t_{2},s)-G(t_{1},s)}{1+t_{2}^{\alpha-1}}\right|\phi_{q}\left(\dfrac{1}{\Gamma(\gamma)}\ds\int_{0}^{s}(s-\tau)^{\gamma-1}a(\tau)f(\tau,u(\tau),u'(\tau))d\tau\right)ds\\ \\
		& \ \
		& \ \ +\ds\int_{0}^{+\infty} \ds\frac{G(t_{1},s)|t_{2}^{\alpha-1}-t_{1}^{\alpha-1}|}{(1+t_{1}^{\alpha-1})(1+t_{2}^{\alpha-1})}\phi_{q}\left(\dfrac{1}{\Gamma(\gamma)}\ds\int_{0}^{s}(s-\tau)^{\gamma-1}a(\tau)f(\tau,u(\tau),u'(\tau))d\tau\right)ds\\ \\
		& \ \
		&\leq \ds\frac{2L|t_{2}^{\alpha-1}-t_{1}^{\alpha-1}|}{1+t_{2}^{\alpha-1}}\ds\int_{0}^{+\infty}\phi_{q}\left(\dfrac{1}{\Gamma(\gamma)}\ds\int_{0}^{s}(s-\tau)^{\gamma-1}a(\tau)f(\tau,u(\tau),u'(\tau))d\tau\right)ds
	\end{eqnarray*}
	So we conclude that  
	\begin{eqnarray*}
		&\left|\ds\frac{Tu(t_2)}{1+t_{2}^{\alpha-1}}-\ds\frac{Tu(t_1)}{1+t_{1}^{\alpha-1}}\right|& \leq \ds\frac{2L|t_{2}^{\alpha-1}-t_{1}^{\alpha-1}|}{1+t_{2}^{\alpha-1}}\ds\int_{0}^{+\infty}\phi_{q}\left(\dfrac{1}{\Gamma(\gamma)}\ds\int_{0}^{s}(s-\tau)^{\gamma-1}a(\tau)f(\tau,u(\tau),u'(\tau))d\tau\right)ds\\ \\ & \ \ &\rightarrow 0 \ \ \emph{as uniformly} \ \ t_1 \rightarrow t_2 \ \ for \ \ u\in\Omega.
	\end{eqnarray*}
	Similarly we can prove that
	$$\left|\ds\frac{T'u(t_2)}{1+t_{2}^{\alpha-1}}-\ds\frac{T'u(t_1)}{1+t_{1}^{\alpha-1}}\right|\rightarrow 0,$$
	when uniformly $t_1 \rightarrow t_2.$ Hence, $T\Omega$ is equicontinuous on $[0, +\infty).$\\
	\textbf{{Step 3}:} At last we must prove that $T\Omega$ is equiconvergent at infinity.
	For any $u\in \Omega$, we have
	\begin{eqnarray*}
		\ds\int_{0}^{+\infty}\phi_{q}\left(\dfrac{1}{\Gamma(\gamma)}\ds\int_{0}^{s}(s-\tau)^{\gamma-1}a(\tau)f(\tau,u(\tau),u'(\tau))d\tau\right)ds\leq \phi_{q}(B_r)\ds\int_{0}^{+\infty}\phi_{q}\left(\dfrac{1}{\Gamma(\gamma)}\ds\int_{0}^{s}(s-\tau)^{\gamma-1}a(\tau)d\tau\right)ds < +\infty.
	\end{eqnarray*}
	From Lemma \ref{lem2.8}, we can get
	\begin{eqnarray*}
		&\ds\lim_{t\rightarrow +\infty}\left| \ds\frac{Tu(t)}{1+t^{\alpha-1}}\right|&=\lim_{t\rightarrow +\infty}\left|\ds\int_{0}^{+\infty}\ds\frac{G(t,s)}{1+t^{\alpha-1}}\phi_{q}\left(\dfrac{1}{\Gamma(\gamma)}\ds\int_{0}^{s}(s-\tau)^{\gamma-1}a(\tau)f(\tau,u(\tau),u'(\tau))d\tau\right)ds\right|\\ \\
		& \ \ 
		&\leq L\ds\int_{0}^{+\infty}\phi_{q}\left(\dfrac{1}{\Gamma(\gamma)}\ds\int_{0}^{s}(s-\tau)^{\gamma-1}a(\tau)f(\tau,u(\tau),u'(\tau))d\tau\right)ds\\ \\
		& \ \
		&< +\infty.
	\end{eqnarray*}
	Similarly we can obtain the following
	$$\ds\lim_{t\rightarrow +\infty}\left| \ds\frac{(Tu)'(t)}{1+t^{\alpha-1}}\right|< +\infty.$$
	Hence, $T\Omega$ is equiconvergent at infinity. Consequently, by means of compactness criterion in Lemma \ref{lem2.10}, we deduce that integral operator $T: E\rightarrow E$ is completely continuous operator.
\end{proof}
\indent For easy statement, denote
\begin{eqnarray}\label{eq.9}
M=L(\alpha-1)\phi_{q}(B_\delta)\ds\int_{0}^{+\infty}\phi_{q}\left(\dfrac{1}{\Gamma(\gamma)}\ds\int_{0}^{s}(s-\tau)^{\gamma-1}a(\tau)d\tau\right)ds 
\end{eqnarray}
where \textit{L} is defined by \eqref{eq.6}.
\section{Main result}
\begin{theorem}\label{teo3.1}
	Let that conditions $(H1)-(H3)$ hold and the following condition is satisfied:\\ there exist positive constant $ \delta $ such that 
	\begin{eqnarray}\label{eq3.1}
	\ds\frac{\delta}{M}\geq 1.
	\end{eqnarray}
	Then the fractional BVP \eqref{eq1.1} has a positive solution $ u=u(t) $ such that 
	$$ 0\leq\ds\frac{u(t)}{1+t^{\alpha-1}}\leq\delta, \ \  0\leq\ds\frac{u'(t)}{1+t^{\alpha-1}}\leq\delta, \ \ t\in[0, +\infty).$$
\end{theorem}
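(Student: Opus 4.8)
The plan is to apply the Leray--Schauder Nonlinear Alternative (Theorem \ref{teo2.9}) to the operator $T$ of \eqref{eq2.6}, whose fixed points coincide with the solutions of \eqref{eq1.1} by Lemma \ref{lem2.7}. I would take the convex set $C=E$ and the open ball $U=\{u\in E:\|u\|<\delta\}$, which contains $0$ since $\delta>0$. By Lemma \ref{lem2.11} the map $T:\overline{U}\to E$ is completely continuous, so Theorem \ref{teo2.9} guarantees that either $(E_1)$ or $(E_2)$ holds; the whole argument then reduces to excluding the alternative $(E_2)$.

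To rule out $(E_2)$, suppose there were $u\in\partial U$, hence $\|u\|=\delta$, and $\lambda\in(0,1)$ with $u=\lambda Tu$. The central estimate is $\|Tu\|\le M$, with $M$ as in \eqref{eq.9}. From $\|u\|\le\delta$ one gets $0\le u(\tau)/(1+\tau^{\alpha-1})\le\delta$ and $0\le u'(\tau)/(1+\tau^{\alpha-1})\le\delta$ for all $\tau\ge0$; writing $u(\tau)=(1+\tau^{\alpha-1})\,\frac{u(\tau)}{1+\tau^{\alpha-1}}$ and likewise for $u'$, condition $(H2)$ yields the finite bound $f(\tau,u(\tau),u'(\tau))\le B_\delta$. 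Using $G_t(t,s)/(1+t^{\alpha-1})\le(\alpha-1)L$ from Lemma \ref{lem2.8}, the fact that $\phi_q$ is increasing and multiplicative on $[0,+\infty)$ (so the factor $\phi_q(B_\delta)$ pulls out), and the convergence supplied by $(H3)$, I would bound $\|(Tu)'\|_\infty$ by exactly the right-hand side of \eqref{eq.9}, namely $M$, and, since $\alpha-1>1$, obtain $\|Tu\|_\infty\le M$ as well; hence $\|Tu\|=\max\{\|Tu\|_\infty,\|(Tu)'\|_\infty\}\le M$. Combining with $u=\lambda Tu$ gives $\delta=\|u\|=\lambda\|Tu\|\le\lambda M<M\le\delta$ by \eqref{eq3.1}, a contradiction, so $(E_2)$ is impossible.

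Consequently $(E_1)$ must hold: $T$ has a fixed point $u\in\overline{U}$ with $\|u\|\le\delta$, which by Lemma \ref{lem2.7} solves \eqref{eq1.1}. To see it is a genuine positive solution, note that $G(t,s)\ge0$ and $G_t(t,s)\ge0$ (Lemma \ref{lem2.8}), $a\ge0$ by $(H3)$, $f\ge0$ by $(H2)$, and $\phi_q$ maps $[0,+\infty)$ into itself; hence the representation \eqref{eq2.4} forces $u(t)\ge0$ and $u'(t)\ge0$, while the nondegeneracy $f(t,0,0)\not\equiv0$ on any subinterval of $(0,+\infty)$ in $(H2)$ prevents $u\equiv0$. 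The two estimates $0\le u(t)/(1+t^{\alpha-1})\le\delta$ and $0\le u'(t)/(1+t^{\alpha-1})\le\delta$ then follow from $\|u\|\le\delta$ together with the nonnegativity of $u$ and $u'$.

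The main obstacle is the a priori bound $\|Tu\|\le M$: one must carry the uniform bound $B_\delta$ through the inner Riemann--Liouville integral and the outer $\phi_q$, relying on the multiplicativity and monotonicity of $\phi_q$ on $[0,+\infty)$ to extract $\phi_q(B_\delta)$, and recognise that because $\alpha-1>1$ the derivative norm dominates, so that the resulting constant is precisely the $M$ of \eqref{eq.9}. The finiteness of the outer integral is exactly what $(H3)$ provides, and the finiteness of $B_\delta$ is what the boundedness clause of $(H2)$ provides; everything else is a bookkeeping application of the estimates already recorded in Lemma \ref{lem2.8}.
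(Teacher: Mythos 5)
Your proposal is correct and follows essentially the same route as the paper: apply the Leray--Schauder alternative to $T$ on $U=\{u\in E:\|u\|<\delta\}$, exclude alternative $(E_2)$ via the a priori bound $\|Tu\|\le M$ obtained from Lemma \ref{lem2.8}, the factorization of $\phi_q(B_\delta)$, and $(H3)$, and conclude $\delta<M$ contradicts \eqref{eq3.1}. Your added remarks on why the fixed point is nonnegative and nontrivial (via $G\ge 0$, $G_t\ge 0$, and $f(t,0,0)\not\equiv 0$) only make explicit what the paper leaves implicit.
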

\begin{proof}
	Let us consider the following fractional BVP
	\begin{eqnarray}\label{eq3.2}
	\begin{cases}
	D_{0^+}^\gamma\left( \phi_{p}\left( D_{0^+}^\alpha u(t)\right) \right)  + \lambda(t)f(t,u(t),u'(t)) =0,\ \ t\in[0, +\infty),\ \ 2<\alpha\leq3, \ \ \lambda\in (0,1),\\
	u(0)=u'(0)=0, \\ 
	\displaystyle\lim_{t\rightarrow +\infty}D_{0^+}^{\alpha-1} u(t) = \displaystyle\sum_{i=1}^{m-2} \eta_i I_{0^+}^\beta u'(\xi_i),\ \ D_{0^+}^{\alpha} u(t)|_{t=0} =0,\\
	0<\xi_1<\xi_2<...<\xi_{m-2}< +\infty, \ \ \eta_{i}>0, \ \ \beta>0, \ \ 0<\gamma\leq1.
	\end{cases}
	\end{eqnarray}
	We know that solving \eqref{eq3.2} is equivalent to solving the fixed point problem $ u=\lambda Tu. $\\
	Assume that $$U=\left\lbrace u\in E\ \ | \ \ \|u\|<\delta\right\rbrace.$$
	We claim that there is no $u\in\partial U$ such that $u=\lambda Tu$ for  $\lambda\in(0, 1).$\\
	The proof is immediate, because if there exist $ u\in\partial U $ with $ u=\lambda Tu,$ then for $\lambda \in(0, 1)$ we have 
	\begin{eqnarray*}
		&\|u(t)\|_{\infty}& = \|\lambda(Tu)(t)\|_{\infty} = \ds\sup_{t\in [0,+\infty)}\lambda\left| \dfrac{(Tu)(t)}{1+t^{\alpha-1}}\right|\\ \\ & \ \ & 
		< \ds\sup_{t\in [0,+\infty)}\ds\int_{0}^{+\infty} \ds\frac{G(t,s)}{1+t^{\alpha-1}}\phi_{q}\left(\dfrac{1}{\Gamma(\gamma)}\ds\int_{0}^{s}(s-\tau)^{\gamma-1}a(\tau)f(\tau,u(\tau),u'(\tau))d\tau\right)ds\\ \\ & \ \ &
		\leq L\ds\int_{0}^{+\infty}\phi_{q}\left(\dfrac{1}{\Gamma(\gamma)}\ds\int_{0}^{s}(s-\tau)^{\gamma-1}a(\tau)f( \tau,(1+\tau^{\alpha-1})\ds\frac{u(\tau)}{1+\tau^{\alpha-1}},(1+\tau^{\alpha-1})\ds\frac{u'(\tau)}{1+\tau^{\alpha-1}})\right) ds\\ \\ & \ \ & 
		\leq L\phi_{q}(B_\delta)\ds\int_{0}^{+\infty}\phi_{q}\left(\dfrac{1}{\Gamma(\gamma)}\ds\int_{0}^{s}(s-\tau)^{\gamma-1}a(\tau)d\tau\right)ds\\ \\ & \ \ & 
		\leq M.
	\end{eqnarray*}
	Analogously we can show that 
	\begin{eqnarray*}
		&\|u'(t)\|_{\infty}&=\|\lambda(Tu)'(t)\|_{\infty}=\ds\sup_{t\in [0,+\infty)}\lambda\left| \dfrac{(Tu)'(t)}{1+t^{\alpha-1}}\right|< M.
	\end{eqnarray*}
	Both inequality, we conclude that $\|u\|=\|\lambda Tu\|<M.$ This yields that
	\begin{eqnarray*}
		\ds\frac{\delta}{M}<1,
	\end{eqnarray*}
	which is contradiction with \eqref{eq3.1}. Then by means of Theorem \ref{teo3.1}, the fractional BVP \eqref{eq1.1} has a positive solution $u = u(t)$ such that
	$$ 0\leq\ds\frac{u(t)}{1+t^{\alpha-1}}\leq\delta, \ \  0\leq\ds\frac{u'(t)}{1+t^{\alpha-1}}\leq\delta, \ \ t\in[0, +\infty).$$
	This completes the proof.
\end{proof}

\section{\large\bf Example}

\textbf{Example 4.1}
	Let $ \alpha = \dfrac{5}{2},\ \ \beta = \displaystyle\frac{1}{2},\ \ \eta_1 = \eta_2 = \xi_1 = \displaystyle\frac{1}{3}, \ \ \xi_2 = \displaystyle\frac{2}{3}, \ \ m = 4, \ \ p = 2, \ \ \phi_{p}(u) = u,\ \ \gamma = 1, \\ f(t, u, v) = \dfrac{1}{9}\left( \dfrac{2u}{1+t^\frac{3}{2}}+\dfrac{u'}{1+t^\frac{3}{2}}+1\right) $ in the BVP \eqref{eq1.1}. Now we consider the following fractional BVP
	\begin{eqnarray}\label{eq4.1}
	\begin{cases}
	D_{0^+}^{1}\left( \phi_{p}\left( D_{0^+}^{\frac{5}{2}} u(t)\right) \right)  + a(t)f(t,u(t),u'(t)) =0,\ \ t\in[0, +\infty), \\
	u(0)=u'(0)=0, \\ 
	\displaystyle\lim_{t\rightarrow +\infty}D_{0^+}^{\frac{3}{2}} u(t) = \displaystyle\sum_{i=1}^{2} \eta_i I_{0^+}^{\frac{1}{2}} u'(\xi_i),\ \ D_{0^+}^{\frac{5}{2}} u(t)|_{t=0}=0.
	\end{cases}
	\end{eqnarray}
	Now, we show that the conditions (H1)-(H3) hold. $\eta_1=\eta_2=\dfrac{1}{3}>0$, $\ds\sum_{i=1}^{2}\eta_{i}\xi_{i}=\dfrac{1}{3}<\Gamma(2)=1.$ Then the condition (H1) is satisfied. $f(t,0,0)=\dfrac{1}{9}\not\equiv 0$ on any subinterval of $(0,+\infty)$. Let $u, v$ are bounded, then there exists  $\delta>0 $ such that $\|u\|_\infty\leq \delta$ and $\ \|v\|_\infty\leq \delta \ .$ We have
	$B_{\delta}=\ds\sup\{f(t, (1+t^\frac{3}{2})u, (1+t^\frac{3}{2})v), \ \ (t,u,v)\in[0,+\infty)\times[0,\delta]\times[0,\delta]\}=\dfrac{3\delta+1}{9}$ since $ f(t, (1+t^\frac{3}{2})u, (1+t^\frac{3}{2})v) = \dfrac{2u+v+1}{9}\leq \dfrac{3\delta+1}{9}$ for $ (t,u,v)\in[0,+\infty)\times[0,\delta]\times[0,\delta]. $ This yields that $f(t, (1+t^\frac{3}{2})u, (1+t^\frac{3}{2})v)$ is bounded. Thus the condition (H2) is satisfied. We take $$\displaystyle\int_{0}^{+\infty}\phi_{q}\left(\dfrac{1}{\Gamma(\gamma)}\displaystyle\int_{0}^{s}(s-\tau)^{\gamma-1}a(\tau)d\tau \right)ds=\ds\int_{0}^{+\infty}\ds\int_{0}^{s}a(\tau)d\tau ds=1<+\infty.$$ Hence the condition (H3) is satisfied.
	Choose $B_{\delta}=\dfrac{3\delta+1}{9}\geq\dfrac{1}{3.921}$. It is easy see by calculating that by \eqref{eq.6}
	 $$ L=\displaystyle\frac{\Gamma(\alpha+\beta-1)}{\Gamma(\alpha)[  \Gamma(\alpha+\beta-1)-\displaystyle\sum_{i=1}^{m-2}\eta_i\xi_i^{\alpha+\beta-2}]}=\displaystyle\frac{1}{\frac{3\sqrt{\pi}}{4}(1-\frac{1}{3})}=\dfrac{2}{\sqrt{\pi}}\approx1.128$$ and by \eqref{eq.9} $$M=L(\alpha-1)\phi_{q}(B_\delta)\displaystyle\int_{0}^{+\infty}\displaystyle\int_{0}^{s}a(\tau)d\tau ds=\dfrac{2}{\sqrt{\pi}}\dfrac{3}{2}B_\delta.1=\dfrac{3}{\sqrt{\pi}}B_\delta\approx(1.693)B_\delta.$$
Finally, since $\dfrac{9B_\delta -1}{3}\geq (1,693)B_\delta$ with $B_\delta\geq\dfrac{1}{3.921}$, the condition \eqref{eq3.1} is satisfied. By means of Theorem \ref{teo3.1}, we conclude that the fractional BVP \eqref{eq4.1} has at least one positive solution $u=u(t)$ such that
	$$ 0\leq\ds\frac{u(t)}{1+t^{\frac{3}{2}}}\leq\delta, \ \  0\leq\ds\frac{u'(t)}{1+t^{\frac{3}{2}}}\leq\delta, \ \ t\in[0, +\infty).$$


\end{document}